\documentclass[preprint]{elsarticle}
\usepackage[T1]{fontenc}
\usepackage[utf8]{inputenc}
\usepackage[letterpaper, margin=1in]{geometry}
\usepackage[colorlinks]{hyperref}
\usepackage{subfigure}
\usepackage{array}
\usepackage{amsmath,amssymb,amsfonts,xcolor,amsthm,dsfont,,mathabx}
\usepackage{mathtools, physics}
\usepackage{placeins}
\usepackage{csquotes}
\usepackage[capitalise, nameinlink, noabbrev, nosort]{cleveref}
\usepackage{doi}

\setlength{\textwidth}{\paperwidth}
\addtolength{\textwidth}{-60mm}
\setlength{\oddsidemargin}{5mm}
\setlength{\evensidemargin}{5mm}
\setlength{\topmargin}{0mm}
\setlength{\textheight}{\paperheight}
\addtolength{\textheight}{-65mm}
\setlength{\footskip}{10mm}

\usepackage{latexsym}
\usepackage{euscript}
\usepackage{tocvsec2}
\usepackage{etoolbox,caption}
\usepackage{float}
\usepackage{marginnote}
\usepackage{epsfig,tikz}
\usepackage{tikz-cd}
\usetikzlibrary{patterns}
\usetikzlibrary{arrows.meta}
\usetikzlibrary{matrix}

\usepackage{enumitem}
\makeatletter
\newcommand{\mylabel}[2]{#2\def\@currentlabel{#2}\label{#1}}
\makeatother

\newtheorem*{theorem-no}{Theorem}


\numberwithin{pulse}{section}  


\makeatletter\g@addto@macro\bfseries{\boldmath}\makeatother

\makeatletter
\let\origsection\section
\renewcommand\section{\@ifstar{\starsection}{\nostarsection}}
\newcommand\sectionspace{\vspace{0.5ex}}
\newcommand\nostarsection[1]{\sectionspace\origsection{#1}\sectionspace}
\newcommand\starsection[1]{\sectionspace\origsection*{#1}\sectionspace}
\makeatother


\setlist[enumerate]{font=\normalfont}
\crefname{enumi}{}{}

\crefname{case}{case}{cases}
\creflabelformat{case}{#2#1#3}

\usepackage[hang, flushmargin]{footmisc}
\usepackage[english]{isodate}
\usepackage{mathrsfs}

\usepackage[capitalise, nameinlink, noabbrev, nosort]{cleveref}

\allowdisplaybreaks

\setenumerate{listparindent=\parindent}


\crefname{page}{page}{pages}

\numberwithin{equation}{section}
\crefname{equation}{equation}{equations}

\crefname{inequality}{inequality}{inequalities}
\creflabelformat{inequality}{#2(#1)#3}

\crefname{condition}{condition}{conditions}
\creflabelformat{condition}{#2(#1)#3}

\newtheorem{theorem}{Theorem}[section]

\newtheorem{thm}[theorem]{Theorem}
\crefname{thm}{Theorem}{Theorems}

\newtheorem{lem}[theorem]{Lemma}
\crefname{lem}{Lemma}{Lemmas}

\newtheorem{prop}[theorem]{Proposition}
\crefname{prop}{Proposition}{Propositions}

\newtheorem{cor}[theorem]{Corollary}
\crefname{cor}{Corollary}{Corollaries}

\newtheorem{assumption}[theorem]{Assumption}
\crefname{assumption}{Assumption}{Assumptions}

\theoremstyle{definition}

\newtheorem{defn}[theorem]{Definition}
\crefname{defn}{Definition}{Definitions}

\newtheorem{rmk}[theorem]{Remark}
\crefname{rmk}{Remark}{Remarks}

\newtheorem{example}[theorem]{Example}
\crefname{example}{Example}{Examples}

\newtheorem{claim}[theorem]{Claim}
\newtheorem*{claim*}{Claim}

\newcommand{\set}[1]{\left\{ #1 \right\}} 
\newcommand{\innprod}[1]{\left \langle #1 \right \rangle} 
\newcommand{\of}[1]{\left( #1 \right)} 

\newcommand{\Hil}{\mathcal{H}}
\newcommand{\fH}{\mathfrak{H}}

\newcommand{\fK}{\mathfrak{K}}
\newcommand{\fG}{\mathfrak{G}}
\newcommand{\F}{\mathbb{F}}

\newcommand{\G}{\mathcal{G}}
\newcommand{\N}{\mathbb{N}}
\newcommand{\C}{\mathbb{C}}

\newcommand{\U}{\mathcal{U}}
\newcommand{\B}{\mathcal{B}}
\newcommand{\unsp}{\G^{(0)}}
\newcommand{\prodsp}{\G^{(2)}} 
\newcommand{\ray}{\text{ray}}
\newcommand{\bxi}{\boldsymbol{\xi}}
\newcommand{\boldeta}{\boldsymbol{\eta}}
\newcommand{\RA}{R^u_{A(\G)}}

\newcommand{\calpi}{\varpi}

\renewcommand\paragraph[1]{\par\vspace{1em}\noindent\textbf{#1}}

\makeatletter
\def\ps@pprintTitle{%
  \let\@oddhead\@empty
  \let\@evenhead\@empty
  \let\@oddfoot\@empty
  \let\@evenfoot\@oddfoot
}
\makeatother

\begin{document}
\begin{frontmatter}
\title{On the restriction maps of the Fourier and Fourier-Stieltjes algebras over locally compact groupoids}

\author[mysecondaryaddress]{Joseph DeGaetani}
\author[mymainaddress]{Mahya Ghandehari\corref{mycorrespondingauthor}}
\cortext[mycorrespondingauthor]{Corresponding author}
\ead{mahya@udel.edu}
\address[mysecondaryaddress]{Department of Mathematics, State University of New York at Oswego, 
Oswego, NY, 13126, USA}
\address[mymainaddress]{Department of Mathematical Sciences, University of Delaware, Newark, DE, 19716, USA}
%
%
\begin{abstract}
The Fourier and Fourier-Stieltjes algebras over locally compact groupoids have been defined in a way that parallels their construction for groups.
In this article, we extend the results on surjectivity or lack
of surjectivity of the restriction map on the Fourier and Fourier-Stieltjes algebras of groups to the groupoid setting. In particular, we consider the maps that restrict the domain of these functions in the Fourier or Fourier-Stieltjes algebra of a groupoid to an isotropy subgroup. These maps are continuous contractive algebra homomorphisms. When the groupoid is \'{e}tale, we show that the restriction map on the Fourier algebra is surjective. The restriction map on the Fourier-Stieltjes algebra is not surjective in general. We prove that for a transitive groupoid with a continuous section or a group bundle with discrete unit space, the restriction map on the Fourier-Stieltjes algebra is surjective. 
We further discuss the example of an HLS groupoid, and obtain a necessary condition for surjectivity of the restriction map in terms of property FD for groups, introduced by Lubotzky and Shalom. As a result, we present examples where the restriction map for the Fourier-Stieltjes algebra is not surjective. 
Finally, we use the surjectivity results to provide conditions for the lack of certain Banach algebraic properties, including the (weak) amenability and existence of a bounded approximate identity,  in the Fourier algebra of \'{e}tale groupoids.
\end{abstract}
\begin{keyword}
Locally compact groupoid\sep Fourier algebra\sep Fourier-Stieltjes algebra.
\MSC[2020] 43A30\sep 46J99\sep 22A22. 
\end{keyword}
\end{frontmatter}


\section{Introduction}
The Fourier and Fourier-Stieltjes algebras of a general locally compact group $G$, denoted by $A(G)$ and $B(G)$, were introduced by Eymard in his seminal paper \cite{Eymard}. 
These Banach algebras consist of coefficient functions associated with certain group representations of $G$, and serve as foundational structures for studying Fourier analysis on locally compact groups.
For a locally compact Abelian group $G$, the Fourier and Fourier-Stieltjes algebras $A(G)$ and $B(G)$ can be identified, via the Fourier transform, with the group algebra and the measure algebra of the dual group $\widehat{G}$. 
In the case of a general (not necessarily Abelian) group $G$, the Fourier and Fourier-Stieltjes algebras are closely related to prominent operator algebras associated with $G$: 
specifically, $B(G)$ can be identified with the continuous linear dual to the group C*-algebra, and $A(G)$ serves as the predual to the group von Neumann algebra. 
These algebras thus play a central role in harmonic analysis on both Abelian and non-Abelian groups.

The definitions of Fourier and Fourier-Stieltjes algebras have been extended to settings in which the underlying structure is a locally compact groupoid rather than a group. Groupoids, first introduced by Brandt \cite{Brandt}, are algebraic structures that resemble groups but require only a partial product, and do not have a unique identity element. Instead, groupoids possess a unit space $\unsp$, whose elements serve as local identities. Each groupoid $\G$ admits range and source maps, $r$ and $s$, which map elements of $\G$ into the unit space $\unsp$.
For each $u\in \unsp$, the isotropy subgroup of $\G$ at $u$ is the subgroup $\G_u^u:=r^{-1}(\{u\})\cap s^{-1}(\{u\})$ that consists of elements that start and end at $u$. 
We present the necessary background material on groupoids in \Cref{subsec:groupoid}.

A theory of the Fourier and Fourier-Stieltjes algebras of locally compact groupoids has been developed in recent years from multiple perspectives \cite{Oty,PatersonPaper,Ramsay,Renault}. In this paper, we adopt the approach from \cite{PatersonPaper}, which develops a continuous theory for the Fourier and Fourier-Stieltjes algebras (in contrast to the `measurable' perspective taken in \cite{Renault}).
In this approach, a representation of a groupoid is defined as a family of unitary maps between Hilbert spaces from a continuous Hilbert bundle. Coefficient functions are then obtained using sections of the Hilbert field. The Fourier-Stieltjes algebra $B(\G)$ is the set of all coefficient functions of such representations (\Cref{def:BG}), while the Fourier algebra $A(\G)$ is defined using coefficient functions arising from the left regular representation of $\G$ (\Cref{def:FourierAlgebra}).
The definition of $A(\G)$ hinges on the existence a Haar system on $\G$ (\Cref{def:HaarSystem}); this is a collection of `local measures' that allow integration over the groupoid in a way similar to integrating over a group with a Haar measure.

Studying the Fourier and Fourier-Stieltjes algebras of locally compact groups 
provides insight into the properties of a locally compact group $G$ via the Banach algebraic features of $A(G)$ and $B(G)$.
A prominent result of Leptin states that a group $G$ is amenable precisely when $A(G)$ has a bounded approximate identity (\cite{Leptin}). 
For results regarding other important Banach algebraic properties of $A(G)$ and $B(G)$, in particular various forms of (Banach algebra) amenability, see, for example, \cite{ChoiGhandehariPaper1,ChoiGhandehariPaper2,Forrest-88,ForrestSameiSpronkPaper,Lee-etal-WA, LosertPaper,Runde-Spronk,Spronk2002,Spronk2010}.
Roughly speaking, an {amenable} commutative Banach algebra is one that lacks differentiability in a very strong sense.   For several important classes of Banach algebras, amenability identifies those with `good behavior'.   For example,  Connes~\cite{connes78} and  Haagerup~\cite{haagerup83}  showed that for C*-algebras amenability and nuclearity coincide. 
In \cite{Johnson}, Johnson showed that the Banach algebra amenability of the group algebra $L^1(G)$ is equivalent to the amenability of the group $G$ itself.
The motivation for extending this study to the Fourier and Fourier-Stieltjes algebras of locally compact groupoids lies in the fact that many C*-algebras naturally arise as algebras generated by groupoids. Thus, groupoids play a foundational role for these C*-algebras, making it valuable to explore the structure of Banach algebras associated with groupoids. Examples of such studies can be found in \cite{Bedos-Conti-dynamics, Bedos-Conti-2016, Finn-Sell, Renault-groupoid-cocyles}.

Many important Banach algebra properties, such as existence of a bounded approximate identity or various versions of amenability,  are \emph{hereditary} properties; that is, they are preserved under continuous algebra homomorphisms with dense range.
When $G$ is a locally compact group, a frequently used strategy in the study of hereditary Banach algebraic features of $A(G)$ and $B(G)$ is to reduce the problem to a suitable closed subgroup $H$ of $G$. In the case of the Fourier algebra, this approach is valid since the restriction map $r_{A}: A(G)\to A(H)$ is surjective \cite{Herz}. On the other hand, the restriction map $r_B:B(G)\to B(H)$ does not even have dense range in general. However, it has been shown that $r_B$ is surjective under additional conditions on $G$ or $H$, e.g., when $G$ is Abelian \cite[34.48]{HewittII} or SIN \cite{Cowling}, or when $H$ is open or compact or the connected component of the identity of $G$ \cite{Liukkonen,McMullen}. Similar restriction results for the Rajchman algebra $B_0(G)=B(G)\cap C_0(G)$ were discussed in \cite{MahyaThesisPaper}.

\subsection{Main contribution}
In the present article, we extend the results on surjectivity or lack of surjectivity of the restriction map on the Fourier and Fourier-Stieltjes algebras of groups to the groupoid setting. Here, we consider restrictions to isotropy subgroups of a given groupoid, rather than restricting to any subgroupoid. 
We do so in order to extend some of the known Banach algebraic results about the group Fourier and Fourier-Stieltjes algebras to certain classes of locally compact groupoids.
We first observe that the restriction of functions in $B(\G)$ (resp.~$A(\G)$) to an isotropy group $\G_u^u$ is a continuous and norm-decreasing algebra homomorphism $B(\G)\to B(\G_u^u)$ (resp.~$A(\G)\to A(\G_u^u)$) for every choice of $u\in \unsp$ (\Cref{thm:RestrictionTheorem}). When $\G$ is an \'{e}tale groupoid (i.e., its range map is a local homeomorphism), we show that the restriction map $A(\G)\to A(\G_u^u)$ is surjective (\Cref{thm:denserange}). We remark that the result of \Cref{thm:denserange} holds more generally for any locally compact groupoid admitting a Haar system that satisfies a certain condition (Condition $(*)$ in \Cref{def:condition*}). We conclude this section by \Cref{cor:hereditary} regarding hereditary Banach algebra properties, such as types of amenability of Banach algebras, that can be studied through the restriction map. This result provides a partial answer to a question of Paterson \cite{PatersonPaper} regarding criteria for the existence of bounded approximate identities for $A(\G)$.

Next, we observe that the restriction map for the Fourier-Stieltjes algebra of groupoids is not automatically surjective. Similar to the group setting, we restrict our attention to cases where we get surjective restriction maps, or a criteria for surjectivity. Namely, we show that the restriction map $B(\G)\to B(\G_u^u)$ is surjective, when $\G$ is a transitive groupoid with a continuous section (\Cref{thm:transitivesurjection}) or it is a group bundle with discrete unit space (\Cref{prop:bundlesurjection}). 
On the other hand, we discuss an interesting class of group bundles, called HLS groupoids, in \Cref{sec:HLSGroupoids}, and we obtain a necessary condition for surjectivity of the restriction map (\Cref{thm:HLS-not-surjective}) in terms of property FD introduced by Lubotzky and Shalom in \cite{Lubotzky}. As a corollary, we provide a class of examples of group bundles where the restriction $B(\G)\to B(\G_u^u)$ is {\it not} surjective (\Cref{cor:surjectivecounterexample}).

\subsection{ Organization of the paper}
In \Cref{chap:NotationsAndBackground}, we present the necessary background for
locally compact groupoids, their Haar systems and unitary representations, the
Fourier and Fourier-Stieltjes algebras for groupoids, and an overview of amenability properties of Banach algebras. 
In \Cref{sec:generalrestrictionresults}, we study the restriction maps $A(\G)\to A(\G_u^u)$ and $B(\G)\to B(\G_u^u)$, and show that when $\G$ is \'{e}tale, the map $A(\G)\to A(\G_u^u)$ is surjective. We devote \Cref{sec:specificrestrictionresults} to the study of surjectivity of the restriction map $B(\G)\to B(\G_u^u)$ for special classes of locally compact groupoids, namely the transitive groupoids (\Cref{sec:transitiverestrictions}), the group bundles with discrete unit spaces (\Cref{sec:discreteunitspacerestriction}), and the HLS groupoids (\Cref{sec:HLSGroupoids}). 
Finally, we conclude the paper by applying our results to obtain a decomposition of the Fourier algebra for certain group bundles in \Cref{sec:boxsum}.


\section{Notations and background}
\label{chap:NotationsAndBackground}

\subsection{Groupoids, topological groupoids, and Haar systems}\label{subsec:groupoid}
A \emph{groupoid} is a set $\G$ together with a subset $\prodsp\subseteq \G\times \G$, a product map $\prodsp \to \G$, $(a,b)\mapsto ab$ and an involutive inverse map $\G\to \G$, $a\mapsto a^{-1}$ that satisfies the following:
\begin{enumerate}
    \item if $(a,b),(b,c)\in \prodsp$, then $(ab,c),(a,bc)\in \G^{(2)}$ and $(ab)c=a(bc);$
    \item $(b,b^{-1})\in \prodsp$ for all $b\in \G$, and if $(a,b)\in \prodsp$, then $a^{-1}(ab)=b$, and $(ab)b^{-1}=a.$
\end{enumerate}
The key points that differentiate a group from a groupoid are the lack of an identity element and the lack of a full product (indeed, only pairs in $\prodsp$ are composable). Despite these differences, groupoids still enjoy some favorable properties of groups, such as cancellation \cite[Lemma 2.1.3]{Sims}.

Groupoids come equipped with \emph{range} and \emph{source} maps, denoted $r$ and $s$ respectively which are defined as 
$$r(a):=aa^{-1} \; \text{ and }\; s(a):=a^{-1}a.$$
These maps can be used to define the \emph{unit space} as $\unsp:=r(\G)=\set{aa^{-1}:a\in \G}$. Clearly, $\unsp=s(\G)$ as well. Next, we summarize some important properties of the concepts introduced so far.

\begin{prop}\cite[Lemma 2.1.4 and Corollary 2.1.6]{Sims}\label{prop:Sims2.1.4} Let $\G$ be a groupoid. Then the following are true:
\begin{enumerate}
    \item $(a,b)\in \prodsp$ if and only if $s(a)=r(b)$;
    \item $r(ab)=r(a)$ and $s(ab)=s(b)$ for all $(a,b)\in \prodsp$;
    \item $(ab)^{-1}=b^{-1}a^{-1}$ for every $(a,b)\in \prodsp$;
    \item $r(u)=s(u)=u$ for all $u\in \unsp$;
    \item $\unsp=\set{a\in \G: (a,a)\in \prodsp \text{ and } a^2=a};$
    \item \label{cancellation} If $(x,a),(x,b)\in \prodsp$ and $xa=xb$, then $a=b$. Similarly, if $(a,x),(b,x)\in \prodsp$ and $ax=bx$, then $a=b$.  
\end{enumerate}  
\end{prop}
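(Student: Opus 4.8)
The plan is to derive items~(1)--(6) directly from the two groupoid axioms, being careful about the logical order since (1), (2) and (3) are interdependent. The key point is that the cancellation law~(6) is in fact the most elementary of the six: if $(x,a),(x,b)\in\prodsp$ with $xa=xb$, then axiom~(ii) gives $a=x^{-1}(xa)=x^{-1}(xb)=b$, and the right-handed version follows symmetrically from the identity $(ab)b^{-1}=a$. So I would prove~(6) first, with no prerequisites, and keep it at hand.

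Next I would record two preliminary facts. First, $(b^{-1},b)\in\prodsp$ for every $b\in\G$: apply the first clause of axiom~(ii) to $b^{-1}$ and use that the inverse is involutive, so that $s(b)=b^{-1}b$ is always defined. Second (call it Lemma~A): for every $a\in\G$ one has $(a,s(a))\in\prodsp$ with $a\,s(a)=a$, and $(r(a),a)\in\prodsp$ with $r(a)\,a=a$. This follows by feeding the triple $(a,a^{-1},a)$ into axiom~(i): associativity yields $(aa^{-1})a=a(a^{-1}a)$ with both $(r(a),a)$ and $(a,s(a))$ in $\prodsp$, while axiom~(ii) applied to the pair $(a^{-1},a)$ gives $a\,s(a)=(a^{-1})^{-1}(a^{-1}a)=a$, hence also $r(a)\,a=a$.

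With~(6) and Lemma~A available, the remaining items unwind in the order (1), (2), (3), (4), (5). For~(1): if $(a,b)\in\prodsp$, then axiom~(i) on $(a,b,b^{-1})$ together with $(ab)b^{-1}=a$ gives $a\,r(b)=a(bb^{-1})=a=a\,s(a)$, so~(6) forces $s(a)=r(b)$; conversely, if $s(a)=r(b)$ then $(a,s(a))=(a,bb^{-1})\in\prodsp$ by Lemma~A, and composing with $(bb^{-1},b)=(r(b),b)\in\prodsp$ via axiom~(i) yields $(a\,r(b),b)\in\prodsp$, that is $(a,b)\in\prodsp$ since $a\,r(b)=a\,s(a)=a$. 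Then~(2) is immediate from~(1): $(ab,b^{-1})\in\prodsp$ is a byproduct of the previous step, so $s(ab)=r(b^{-1})=b^{-1}b=s(b)$, and symmetrically axiom~(i) on $(a^{-1},a,b)$ shows $(a^{-1},ab)\in\prodsp$, whence $r(ab)=s(a^{-1})=aa^{-1}=r(a)$. For~(3): by~(1) we have $(b^{-1},a^{-1})\in\prodsp$ because $s(b^{-1})=bb^{-1}=r(b)=s(a)=r(a^{-1})$; then axiom~(i) on $(ab,b^{-1},a^{-1})$ gives $(ab)(b^{-1}a^{-1})=((ab)b^{-1})a^{-1}=aa^{-1}=r(a)=r(ab)=(ab)(ab)^{-1}$, and cancelling $ab$ by~(6) gives $(ab)^{-1}=b^{-1}a^{-1}$. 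For~(4): writing $u=aa^{-1}\in\unsp$, item~(2) gives $r(u)=r(a)=u$ and $s(u)=s(aa^{-1})=s(a^{-1})=aa^{-1}=u$. Finally~(5): if $u\in\unsp$ then~(4) gives $s(u)=r(u)=u$, so $(u,u)\in\prodsp$ by~(1) and $u^2=r(u)\,u=u$ by Lemma~A; conversely, if $(a,a)\in\prodsp$ and $a^2=a$ then $a\,a=a=a\,s(a)$ by Lemma~A, so~(6) gives $a=s(a)\in s(\G)=\unsp$.

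The only thing that needs care is this ordering: one must avoid invoking~(1) while establishing~(6) or Lemma~A, which is precisely why cancellation must come first. Beyond that bookkeeping there is no genuine obstacle; since this is the standard development, reproduced as \cite[Lemma 2.1.4 and Corollary 2.1.6]{Sims}, in the paper I would simply cite it rather than carry out the chase.
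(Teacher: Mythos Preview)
Your proposal is correct, and in fact the paper does exactly what you suggest in your last sentence: it states the proposition with the citation to \cite[Lemma 2.1.4 and Corollary 2.1.6]{Sims} and gives no proof at all. Your detailed derivation is the standard one, with the only non-obvious choice being your careful ordering (proving cancellation~(6) first so that it is available for the forward direction of~(1) and for~(3) and~(5)); this is sound and matches the development in the cited reference.
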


For $u\in \unsp$, we set $\G^{u}:=r^{-1}\of{\set{u}}$, $\G_u:=s^{-1}\of{\set{u}}$, and $\G_u^v:=\G_u\cap \G^v$. These fibers do not typically form algebraic objects in their own right, however, the set $\G_u^u$ always forms a group, called the \emph{isotropy subgroup at $u$}.

\begin{example}\label{exa:groupoidexample}
Any group is a groupoid where the unit space is the singleton containing the group identity, and $\prodsp$ is the entire product space $\G\times\G$.
Next, we list a few well-known classes of groupoids that are not groups.
\begin{itemize}
\item[(i)] {\it Equivalence relations.}
    Let $R$ be any equivalence relation on a set $X$. Equip $R$ with the partial product $(x,y)(y,z)=(x,z)$ and inversion $(x,y)^{-1}=(y,x)$. 
    Then $R$ becomes a groupoid, with the unit space $R^{(0)}=\set{(x,x):x\in X}$.
\item[(ii)]{\it Transformation groupoids.} 
    Let $X$ be a set, and let $\Gamma$ be a group acting on $X$ by bijections. 
    Let $\G:=\Gamma\times X$. The groupoid product is defined for pairs of the form 
    $\left((g,h\cdot x),(h,x)\right)\in \G\times\G$ and is defined to be $(gh,x)$. The inverse is given by $(g,x)^{-1}=(g^{-1},g\cdot x)$. Then $\G$ becomes a groupoid with
    unit space $\unsp=\{(e_\Gamma,x):\ x\in X\}$.
\item[(iii)] {\it Group bundles.} \label{bundles} The disjoint union of groups, known as a \emph{group bundle}, also forms a groupoid. Namely, 
    for a collection $\{G_i\}_{i\in I}$ of groups, define $\G:=\sqcup_{i\in I} (\{i\}\times G_i)$. 
    Elements of $\G$ are composable if they have identical indices in their first coordinates. The product and the inverse are defined as $(i,x)(i,y)=(i,xy)$ and $(i,x)^{-1}=(i,x^{-1})$ respectively, where $xy$ and $x^{-1}$ are the product  and inverse in the group $G_i$.
    Then $\G$ becomes a groupoid with the unit space $\unsp=\{(i,e_{G_i}):\ i\in I\}$.
\end{itemize}    
\end{example}

A \emph{topological groupoid} is a groupoid $\G$ endowed with a topology in which
the inverse map $\G\to \G$, $\gamma \mapsto \gamma^{-1}$ and the product map $\prodsp\to \G$, $(g,h)\to gh$ are continuous. Here, $\prodsp$ is equipped with the relative topology as a subset of $\G\times \G$. 
A \emph{locally compact} (respectively \emph{Hausdorff}) groupoid is a topological groupoid whose topology is locally compact (respectively Hausdorff).
Clearly, the range and source maps of a topological groupoid are continuous. 
If we take $\G$ to be Hausdorff, then $\unsp$ is also Hausdorff, and consequently, the fibers $\G^u$ and $\G_u$ are closed subsets of $\G$.

Unless otherwise specified, the groupoids in this paper will be assumed to be both locally compact and Hausdorff.
Every locally compact group admits a Haar measure; this is a left (or right) translation invariant regular Borel measure that is finite on compact sets. 
The analog notion in the groupoid setting is a \emph{Haar system}.
 
\begin{defn}\cite[Definition 2.2.2]{PatersonBook}\label{def:HaarSystem}
    A (continuous) \emph{left Haar system} for a locally compact groupoid $\G$ is a family $\set{\lambda^u}_{u\in \unsp}$, where each $\lambda^u$ is a positive regular Borel measure on the locally compact Hausdorff space $\G^u$, such that the following three conditions are satisfied:
     \begin{enumerate}
     \item The support of each $\lambda^u$ is the whole of $\G^u$;
     \item\label{haarsystemfunction} For any $g\in C_c(\G)$, the function $g^{0}:\unsp \to \C$, defined $g^{0}(u)=\int_{\G^u}gd\lambda^u$ belongs to $C_c(\unsp)$;
     \item\label{haarsysteminvariance} For any $x\in \G$ and $f\in C_c(\G)$,
     $$\int_{\G^{s(x)}}f(xz)d\lambda^{s(x)}(z)=\int_{\G^{r(x)}}f(z)d\lambda^{r(x)}(z).$$
     \end{enumerate}
\end{defn}
In contrast to the group case,  Haar system of a groupoid need not be unique.
Assuming a left Haar system is available, we may define a groupoid convolution product as follows:
\begin{equation*}
    (f*g)(x)=\int_{\G^{r(x)}}f(y)g(y^{-1}x)d\lambda^{r(x)}(y), \mbox{ for }\ f,g\in C_c(\G).
\end{equation*}

Local compactness of a groupoid is not enough to guarantee the existence of a Haar system (see \cite{SedaPaper}). It is well-known that if a locally compact groupoid $\G$ possesses a Haar system, then its range and source maps $\G\to\unsp$ will be open (see, e.g., \cite[Proposition 1.23]{ToolKit}). 
However, it is not known if this necessary condition is sufficient in general.  
For special cases, e.g., for second countable locally compact transitive groupoids \cite{SedaPaper,Williams-Haar} or locally compact group bundles \cite[Lemma 1.3]{ToPDown}, the groupoid has a Haar system exactly when the range map $r:\G\to \unsp$ is open.  

While not all locally compact groupoids have Haar systems, \'{e}tale groupoids (defined below) always have a straightforward and easily described one.
\begin{defn}
    A locally compact, Hausdorff groupoid will be called \emph{\'{e}tale} if the range map $r:\G\to \G$ (or equivalently, the source map) is a local homeomorphism.
\end{defn}
All discrete groupoids are \'{e}tale.
All \'{e}tale groupoids have Haar systems consisting of counting measures. \'{E}tale groupoids have received a great deal of attention (see, for example, \cite{MR4533452,Brown-etal-2014,MR4592883,MR4510931,MR4645718,MR4608439,Matui-PAMS-2012,MR4483992,MR4563262,Pedro}).
%

\subsection{Fourier and Fourier-Stieltjes algebras for groupoids}
\label{sec:groupoidalgebras}
To extend the definition of Fourier and Fourier-Stieltjes algebras associated with locally compact groups to the realm of groupoids, we start with defining what a representation of a locally compact groupoid is. 
Roughly speaking, while a group requires unitary representations over Hilbert spaces, groupoids will require $\G$-Hilbert bundles.

\begin{defn}[Continuous Banach field] \cite[Definition 10.1.2]{Dixmier}
\label{def:BanachField}
    A \emph{continuous field of Banach spaces} $\mathscr{E}$ over a topological space $T$ is a family $(E_t)_{t\in T}$ of Banach spaces, with a set $\Gamma \subseteq \Pi_{t\in T} E_t$ such that:
    \begin{enumerate}
        \item \label{linearspace} $\Gamma$ is a complex linear subspace of $\Pi_{t\in T} E_t$;
        \item \label{density} For every $t\in T$, the set $\set{x(t):x\in \Gamma}$ is dense in $E_t$;
        \item \label{normcontinuity} For every $x\in \Gamma$, the function $t\to \norm{x(t)}_{E_t}$ is continuous;
        \item \label{continuoussection} Let $x\in \Pi_{t\in T} E_t$.  If for every $t\in T$ and every $\varepsilon>0$, there exists $x'\in \Gamma$ such that $\norm{x(\tau)-x'(\tau)}_{E_\tau}<\varepsilon$ for all $\tau$ in some neighborhood of $t$, then $x\in \Gamma$.
    \end{enumerate}
    We refer to elements of $\Pi_{t\in T} E_t$ as \emph{sections}, and elements of $\Gamma$ as \emph{continuous sections}.
    In the case where each Banach space is also a Hilbert space, the definition above is referred to as a \emph{continuous Hilbert field}.
\end{defn}

In practice, continuous fields of Banach spaces are built as a `closure' of a collection of sections satisfying \eqref{linearspace}, \eqref{density} and \eqref{normcontinuity} of \Cref{def:BanachField}. This is done using the following result.

\begin{prop}\cite[Proposition 10.2.3]{Dixmier}\label{prop:fundamentalsections}
    Let $\set{E_t}_{t\in T}$ be a family of Banach spaces indexed over a topological space $T$. Suppose that there exists a set $\Xi \subseteq \prod_{t\in T}E_t$ of sections satisfying the first three conditions of \Cref{def:BanachField}. Then there is a unique subset $\Gamma\subseteq \prod_{t\in T}E_t$ that satisfies all four conditions and contains $\Xi$.

    The collection $\Xi$ is referred to as the \emph{fundamental family of sections}, and elements in $\Xi$ are called the \emph{fundamental sections}. 
\end{prop}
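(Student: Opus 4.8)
The plan is to write down $\Gamma$ explicitly, verify the four conditions of \Cref{def:BanachField}, and then extract uniqueness from the verification. Define
\[
\Gamma \ :=\ \bigl\{\, x\in\textstyle\prod_{t\in T}E_t \ :\ \text{for each }t\in T\text{ and every }\varepsilon>0\text{ there is }\xi\in\Xi\text{ with }\norm{x(\tau)-\xi(\tau)}_{E_\tau}<\varepsilon\text{ for all }\tau\text{ in some neighbourhood of }t\,\bigr\}.
\]
Taking $\xi=x$ shows $\Xi\subseteq\Gamma$, so condition \eqref{density} of \Cref{def:BanachField} for $\Gamma$ is inherited from $\Xi$. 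For \eqref{linearspace}, given $x,y\in\Gamma$, scalars $\alpha,\beta\in\C$, a point $t$ and $\varepsilon>0$, approximate $x$ and $y$ near $t$ by $\xi,\eta\in\Xi$ to within $\varepsilon/(|\alpha|+|\beta|+1)$ on neighbourhoods $U$ and $V$; since $\Xi$ is itself a complex linear subspace, $\alpha\xi+\beta\eta\in\Xi$ approximates $\alpha x+\beta y$ to within $\varepsilon$ on $U\cap V$.

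The mildly delicate condition is \eqref{normcontinuity}. Given $x\in\Gamma$, $t_0\in T$ and $\varepsilon>0$, first choose $\xi\in\Xi$ approximating $x$ uniformly to within $\varepsilon$ on a neighbourhood of $t_0$; since $t\mapsto\norm{\xi(t)}_{E_t}$ is continuous, a short estimate with the triangle and reverse triangle inequalities comparing $\norm{x(\cdot)}$ with $\norm{\xi(\cdot)}$ gives $\abs{\norm{x(\tau)}_{E_\tau}-\norm{x(t_0)}_{E_{t_0}}}<3\varepsilon$ on a (smaller) neighbourhood of $t_0$, so $t\mapsto\norm{x(t)}_{E_t}$ is continuous. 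Condition \eqref{continuoussection} then follows by a two-step approximation: if $x$ is locally uniformly approximable by members of $\Gamma$, then for given $t$ and $\varepsilon>0$ pick $x'\in\Gamma$ with $\norm{(x-x')(\tau)}_{E_\tau}<\varepsilon/2$ on a neighbourhood, pick $\xi\in\Xi$ with $\norm{(x'-\xi)(\tau)}_{E_\tau}<\varepsilon/2$ on a smaller neighbourhood, and intersect; this gives $x\in\Gamma$.

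Finally, for uniqueness, suppose $\Gamma'\subseteq\prod_{t\in T}E_t$ satisfies all four conditions and contains $\Xi$. Every $x\in\Gamma$ is locally uniformly approximable by elements of $\Xi\subseteq\Gamma'$, so \eqref{continuoussection} applied to $\Gamma'$ forces $x\in\Gamma'$; hence $\Gamma\subseteq\Gamma'$. Conversely, let $x\in\Gamma'$, $t_0\in T$ and $\varepsilon>0$; by \eqref{density} for $\Xi$ choose $\xi\in\Xi$ with $\norm{x(t_0)-\xi(t_0)}_{E_{t_0}}<\varepsilon$, observe that $x-\xi\in\Gamma'$, and apply \eqref{normcontinuity} for $\Gamma'$ to conclude that $\tau\mapsto\norm{x(\tau)-\xi(\tau)}_{E_\tau}$ is continuous, hence $<\varepsilon$ on a neighbourhood of $t_0$; therefore $x\in\Gamma$, so $\Gamma'\subseteq\Gamma$ and thus $\Gamma'=\Gamma$. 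I expect this last inclusion to be the only step that needs genuine thought: one must use continuity of the norm on $\Gamma'$ to promote the pointwise approximation at $t_0$ supplied by density into the local uniform approximation demanded by the definition of $\Gamma$.
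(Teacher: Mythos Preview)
The paper does not supply its own proof of this proposition; it is quoted verbatim from Dixmier with a citation, so there is nothing in the paper to compare your argument against. That said, your proof is correct and is essentially the standard one (and indeed the one Dixmier gives): define $\Gamma$ as the local-uniform closure of $\Xi$, verify the four axioms directly, and for uniqueness combine density of $\Xi$ at a point with norm-continuity in $\Gamma'$ to upgrade a pointwise approximation to a local uniform one. Your identification of that last step as the only one requiring real thought is accurate.
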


\Cref{prop:fundamentalsections} enables the definition of direct sums and tensor products for continuous Hilbert fields. Let $\fH := (\set{\fH_t}_{t\in T}, \Gamma)$ and $\fK := (\set{\fK_t}_{t\in T}, \Gamma')$ be two continuous Hilbert fields over a topological space $T$. The \emph{direct sum of Hilbert fields}, denoted by $\fH \oplus \fK$, is the continuous Hilbert field with fibers $\set{\fH_t \oplus \fK_t}_{t\in T}$ and a fundamental family of sections $\Gamma \oplus \Gamma'$, where each section is defined by $(f \oplus g)(t) = f(t) \oplus g(t)$. Similarly, the \emph{tensor product of Hilbert fields}, denoted by $\fH \otimes \fK$, is the continuous Hilbert field with fibers $\set{\fH_t \otimes \fK_t}_{t\in T}$ and a fundamental family of sections $\Gamma \otimes \Gamma'$, where each section is given by $(f \otimes g)(t) = f(t) \otimes g(t)$.

\begin{example}\label{exp-fields}
We now describe important examples of continuous fields of Banach spaces.
    \begin{enumerate}
        \item\label{exp-discrete} If $T$ is discrete, then $\Gamma=\Pi_{t\in T} E_t$, and the continuous field coincides with the direct product of the Banach spaces.
        \item\label{exa:contfield} Suppose $E_t=E$ for every $t\in T$. In this case, elements of $\prod_{t\in T} E$ can be identified with functions from $T$ to $E$. Let $C(T,E)$ denote the collection of continuous functions from $T$ to $E$. The \emph{constant field} is the continuous Banach field whose set of fundamental sections is $C(T,E)$ (\cite[pg 7]{PatersonPaper}).
       It is easy to verify that if $f\in \Pi_{t\in T} E_t$ satisfies Condition \eqref{continuoussection} of \Cref{def:BanachField}, then $f$ must belong to $C(T,E)$. That is, every continuous section is a fundamental section in this particular example.
       \item\label{exa:L2(G)} Let $\G$ be a locally compact groupoid with a Haar system 
       $\{\lambda^u\}_{u\in \unsp}$. Set $T=\unsp$ and $E_u=L^2(\G^u,\lambda^u)$ for each $u\in\unsp$. Every function $f\in C_c(\G)$ can be viewed as a section, in the sense that $u\mapsto f^u=f|_{\G^u}$. One can verify that $C_c(\G)$ forms a fundamental family of sections as defined in \Cref{prop:fundamentalsections}, and consequently, uniquely determines a set of continuous sections, which constitutes the left regular field denoted by $L^2(\G)$.
       \cite[pg. 10]{PatersonPaper} provides an alternative definition of $L^2(\G)$ as follows: a function $F:\G\to \C$ is a continuous section of the left regular field if and only if $u\mapsto \norm{F^u}$ is continuous and $u\mapsto\innprod{F^u,g^u}$ is continuous for all $g\in C_c(\G)$.
    \end{enumerate}
\end{example}

Moving forward, let $\fH:=(\set{\fH_t}_{t\in T},\Gamma)$ be a continuous Hilbert field over the space $T$. Denote the norm of each Hilbert space fiber $\mathfrak{H}_t$ as $\|\cdot\|_t$ or $\|\cdot\|_{\mathfrak{H}_t}$ for $t\in T$. 
We define the \emph{section norm}
$$\norm{\xi}_\Delta=\sup_{t\in T}\norm{\xi(t)}_t.$$
Let $\Delta_b(\fH)$ denote the subspace of $\Gamma$ consisting of sections whose $\|\cdot\|_\Delta$-norm is bounded.  Equipped with pointwise operations and $\|\cdot\|_\Delta$-norm, the space $\Delta_b(\fH)$ is a Banach space (\cite[pg 6]{PatersonPaper}).
We further define  $\Delta_c(\fH)$ to be the subspaces of $\Gamma$ of continuous sections with compact support, and $\Delta_0(\fH)$ to be those sections in $\Gamma$ that vanish at infinity under the section norm. All three of $\Delta_b,\Delta_c$, and $\Delta_0$ are $C_0(T)$-modules under pointwise multiplication as the module action (\cite[Proposition 10.1.9]{Dixmier}).

We now give the definition of a groupoid representation as presented in \cite[Section 3]{PatersonPaper}.

\begin{defn}[Groupoid representation]\label{def:ghilbdle}
    Let $\G$ be a topological groupoid, and $\fH:=(\set{\fH_u}_{u\in \unsp},\Gamma)$ be a continuous Hilbert field. A \emph{(continuous unitary) representation of $\G$ on the continuous Hilbert field $\fH$} is a collection of unitary maps $\calpi(x):\fH_{s(x)}\to \fH_{r(x)}$, for each $x\in \G$,  that satisfy the following:
    \begin{enumerate}
        \item \label{coefficientcontinuity} For each $\bxi, \boldeta \in \Delta_b(\mathfrak{H})$ the map $\G\to \C$ defined as $x\mapsto \innprod{\calpi(x)\bxi(s(x)),\boldeta(r(x))}_{\fH_{r(x)}}$ is continuous;
        \item  For every $(x,y)\in \prodsp$, we have $\calpi(xy)=\calpi(x)\calpi(y)$. 
    \end{enumerate}
    A representation as above is also called a \emph{$\G$-Hilbert bundle over $\unsp$}. We use these two terminologies interchangeably. 
\end{defn}

The map $x\mapsto \innprod{\calpi(x)\bxi(s(x)),\boldeta(r(x))}_{\fH_{r(x)}}$ given in \Cref{def:ghilbdle} is referred to as the \emph{coefficient of $\fH$ associated with $\bxi$ and $\boldeta$}, and denoted by $\fH_{\bxi,\boldeta}$.  To differentiate between coefficient functions of group representations, and coefficients of $\G$-Hilbert bundles, we will reserve lower case Greek letters and unbolded symbols for representations, and accented text with bold symbols to refer to $\G$-Hilbert bundles and their sections.
We note that it suffices to check condition \eqref{coefficientcontinuity} of \Cref{def:ghilbdle} for $\bxi,\boldeta\in \Xi$, the fundamental sections, that are $\|\cdot\|_\Delta$-bounded.
 
Let $T=\unsp$. Let $\fH:=(\set{\fH_t}_{t\in T},\Gamma)$ and $\fG:=(\set{\fG_t}_{t\in T},\Gamma')$ be two continuous fields of Hilbert spaces over $T$. 
Suppose $\calpi$ and $\varrho$ are representations of $\G$ on $\fH$ and $\fG$, respectively.
A collection of bounded linear operators $\set{T_t:\fH_t\to \fG_t}_{t\in T}$ induces a map $T:\fH\to \fG$ defined as $(T(\bxi))(t)=T_t(\bxi(t))$. We say $\set{T_t:\fH_t\to \fG_t}_{t\in T}$ is a \emph{morphism} of $\fH$ and $\fG$ if $\set{T(\bxi):\bxi \in \Gamma}\subseteq \Gamma'$ (see \cite[Definition 2.8]{Bos}). A morphism is called an \emph{intertwining morphism} if $\varrho(x)T_{s(x)}=T_{r(x)}\calpi(x)$ for every $x\in \G$. 
Two representations are \emph{unitarily equivalent}, denoted $\fH\simeq \fG$, if there is an intertwining morphism consisting of unitary operators between them (Figure~\ref{fig:morphism}). 
%
\begin{figure}[h]
    \centering
    \[\begin{tikzcd}
	{\fH_{s(x)}} && {\fH_{r(x)}} \\
	\\
	{\fG_{s(x)}} && {\fG_{r(x)}}
	\arrow["{\calpi(x)}", from=1-1, to=1-3]
	\arrow["{\varrho(x)}"', from=3-1, to=3-3]
	\arrow["T_{s(x)}"', from=1-1, to=3-1]
	\arrow["T_{r(x)}", from=1-3, to=3-3]
\end{tikzcd}\]
    \caption{An intertwining morphism between $\fH:=(\set{\fH_t}_{t\in T},\Gamma)$ and $\fG:=(\set{\fG_t}_{t\in T},\Gamma')$.}
    \label{fig:morphism}
\end{figure}

As in the case of group representations, the direct sums and tensor products of $\G$-Hilbert bundles are themselves $\G$-Hilbert bundles \cite[pg 7]{PatersonPaper}. 
Let $\fH$ and $\fK$ be two continuous Hilbert fields over the topological space $\unsp$, and $\calpi$ and $\varrho$ be representations of $\G$ on $\fH$ and $\fG$ respectively. Equip the continuous Hilbert field $\fH\oplus \fK$ with linear actions $(\calpi \oplus \varrho)(x):\fH_{s(x)}\oplus \fK_{s(x)}\to\fH_{r(x)}\oplus \fK_{r(x)}$ given by $(\calpi(x)\oplus \varrho(x))(u\oplus v)=\calpi(x)u\oplus \varrho(x)v$. With these linear actions, $\fH\oplus \fK$ becomes a $\G$-Hilbert bundle.
Similarly, $\fH\otimes \fK$ is the $\G$-Hilbert bundle with linear actions $(\calpi \otimes \varrho)(x):\fH_{s(x)}\otimes \fK_{s(x)}\to\fH_{r(x)}\otimes \fK_{r(x)}$ given by $(\calpi(x)\otimes \varrho(x))(u\otimes v)=\calpi(x)u\otimes \varrho(x)v$.

\begin{example}(The trivial representation)\label{exa:trivialbundle}
Fix a Hilbert space $\Hil$, and for every $u\in \unsp$, let $\mathfrak{H}_u=\Hil$.
Consider the constant field $\Gamma=C(\unsp,\Hil)$ as defined in \Cref{exp-fields}  \eqref{exa:contfield}.
For every $x\in \unsp$, let $\calpi(x)=I_{\Hil}$. Clearly, this forms a $\G$-Hilbert bundle. 
\end{example}

\begin{example}(The left regular representation)\label{exa:LeftRegularBundle}
    Let $\G$ be a locally compact groupoid with a Haar system $\set{\lambda^u}_{u\in \unsp}$. Consider the continuous Hilbert field $L^2(\G):=\set{L^2(\G^u,\lambda^u)}_{u\in \unsp}$ as presented in  \Cref{exp-fields} \eqref{exa:L2(G)}. 
    For $x\in \G$, define 
    $$L_x: L^2(\G^{s(x)},\lambda^{s(x)})\to L^2(\G^{r(x)}, \lambda^{r(x)}), \quad (L_xf)(z)=f(x^{-1}z).$$ 
    This will turn $L^2(\G)$ into a $\G$-Hilbert bundle, denoted $\Lambda$. 
\end{example}

We are now able to define the central objects of this manuscript. 
\begin{defn}\label{def:BG}
    The \emph{Fourier-Stieltjes algebra} of a locally compact groupoid $\G$ is defined as
    $$B(\G):=\set{\fH_{\bxi,\boldeta}:\fH \text{ is a $\G$-Hilbert Bundle, }\bxi,\boldeta\in \Delta_b(\fH)},$$
   equipped with pointwise operations and the norm defined as
   $$\norm{\varphi}_{B(\G)}:=\inf_{\varphi=\fH_{\bxi,\boldeta}}\norm{\bxi}_\Delta\cdot{\norm{\boldeta}_\Delta}$$
where the infimum is taken over all coefficient representations of $\varphi=\fH_{\bxi,\boldeta}$.
\end{defn}
Clearly, $B(\G)$ is an algebra, since $\fH_{\bxi,\boldeta}+\fH'_{\bxi',\boldeta'}=(\fH\oplus \fH')_{\bxi\oplus \bxi',\boldeta\oplus\boldeta'}$ and\linebreak $\fH_{\bxi,\boldeta}\fH'_{\bxi',\boldeta'}=(\fH\otimes \fH')_{\bxi\otimes \bxi',\boldeta\otimes\boldeta'}$. 
It has been shown that $B(\G)$ is in fact a unital commutative Banach algebra \cite{PatersonPaper}.

\begin{defn}\label{def:FourierAlgebra}
Let $\G$ be a locally compact groupoid with a left Haar system $\{\lambda_u\}_{u\in\unsp}$, and $\Lambda$ denote the left regular representation of $\G$
defined on the continuous Hilbert field $L^2(\G)=\set{L^2(\G^u,\lambda^u)}_{u\in \unsp}$.
Define
$$A_{cf}(\G)=\left\{\Lambda_{f,g}:\ f,g\in C_c(\G)\right\}.$$
Furthermore, let $A_c(\G)$ be the algebra generated by $A_{cf}(\G)$ under pointwise operations. The \emph{Fourier Algebra} of a locally compact groupoid is defined as $$A(\G):=\overline{A_c(\G)}^{\norm{\cdot}_{B(\G)}}.$$
\end{defn}
It is easy to see that both $A_{cf}(\G)$ and $A_c(\G)$ are subsets of $C_c(\G)$. 
By definition, $A(\G)$ is a closed subalgebra of $B(\G)$, thus $A(\G)$ is itself a Banach algebra. 
\begin{assumption}\label{assump-etale}
When defining the Fourier algebra for an \'{e}tale groupoid, we equip the groupoid with the collection of counting measures as the canonical choice for its Haar system.
\end{assumption}
\begin{rmk}[Relation with the Fourier and Fourier-Stieltjes algebras of groups]
When $\G$ is a group, then $B(\G)$ and $A(\G)$ coincide with the Fourier-Stieltjes and Fourier algebras in the group case.
These algebras for a locally compact group $G$ are defined through continuous unitary representations, i.e., group homomorphisms $\pi:G\to \U(\Hil)$, for some Hilbert space $\Hil$, with continuous \emph{coefficient functions}, defined as follows:
$$\pi_{\xi,\eta}(x):G\to \C \;\;\;\;  x\mapsto \innprod{\pi(x)\xi,\eta}, \ \mbox{ for every } \xi,\eta \in \Hil.$$
The collection of all coefficient functions for (equivalence classes of) continuous unitary representations of $G$ is the Fourier-Stieltjes algebra $B(G)$. The space $B(G)$ forms a commutative Banach algebra under the pointwise operations and norm defined as 
$$\norm{\varphi}=\inf_{\varphi=\pi_{\xi,\eta}}\norm{\xi}\norm{\eta}.$$
Alternatively, $B(G)$ can be realized as $C^*(G)^*$, the continuous linear dual to the full group C*-algebra.

The Fourier algebra of a locally compact group $G$ is defined as the Banach subalgebra of $B(G)$ generated by compactly supported functions. It turns out that $A(G)$ is the set of all coefficient functions of the left regular representation of $G$ acting on the Hilbert space $L^2(G)$ under the inner product $\innprod{f,g}=\int_G f(x)\overline{g(x)}$ defined as follows:
$$L:G\to {\mathcal U}(L^2(G)), \ (L(x)f)(z)=f(x^{-1}z)$$
In \cite{Eymard}, Eymard first constructs $A(G)$ as the predual to the group von Neumann algebra, and then shows that these two definitions are equivalent  \cite[Chapter 5]{Eymard}. 
\end{rmk}
\begin{rmk}[Alternative approaches to the Fourier and Fourier-Stieltjes algebras]
    In \cite{Renault} Renault studies \emph{measurable} versions of the algebras $B(\G)$ and $A(\G)$ in the context of \emph{measurable} Hilbert fields. The definitions are identical except for the two continuity conditions replaced with measurability (e.g., the coefficients need only be measurable, not necessarily continuous). 

Paterson suggests three different definitions for the Fourier algebra in \cite{PatersonPaper}, each of which mimic one presentation of the algebra in the group case. All three definitions reduce to the existing Fourier algebra in the case when the groupoid is just a group, however it seems to be unknown whether the three definitions coincide for general groupoids. 
The first of the alternate definitions is to define $A(\G)$ to be the closure of coefficients coming from the set of sections which vanish at infinity. The second, due to \cite[pg 186]{Oty} suggests defining $A(\G)$ to be the closure of $C_c(\G)\cap B(\G)$ in the $B(\G)$ norm.
\end{rmk}
\subsection{Amenability of Banach algebras}
\label{sec:banachalgebrasandexamples}
We will assume familiarity with the ideas surrounding Banach algebras, Banach modules, and amenable groups. For any Banach space in this paper, we will take our scalar field to be $\C$. 
To denote the product of two elements $x,y$ in a Banach algebra, we use $x*y$ or $xy$ interchangeably. To show the module action, we use $a\cdot x$ or $x\cdot a.$
Every Banach $A$-bimodule $E$ leads to a \emph{dual $A$-bimodule} $E^*$ via the actions 
    $$(\varphi\cdot a)(x)=\varphi(a\cdot x) \ \mbox{ and }\ (a\cdot \varphi)(x)=\varphi(x\cdot a), \ \mbox{ for } x\in E, a\in A, \varphi\in E^*.$$

An \emph{approximate identity} of a Banach algebra $A$ is a net of elements $\set{\psi_\lambda}_{\lambda\in I}$ such that $\norm{\psi_\lambda * x-x}_A\to_\lambda 0$ and $\norm{x*\psi_\lambda-x}_A\to_\lambda 0$ for every $x\in A$. If $\sup\set{\norm{\psi_\lambda}_A:\lambda \in I}<\infty$ then the net is a \emph{bounded approximate identity}.

The idea of linking amenability of groups to the structure of their group algebra is what prompted the definition and study of the amenability of Banach algebras. Roughly speaking, an amenable Banach algebra is one that does not admit nontrivial derivations. 
A linear map $D$ from a Banach algebra $A$ to a Banach $A$-bimodule $E$ is called a \emph{derivation} if 
$$D(ab)=a\cdot D(b)+(D(a))\cdot b, \ \mbox{ for every } a,b\in A.$$
Easy examples of derivations are $D_a(x)=a\cdot x-x\cdot a$, for a fixed element $a\in E$; these are called \emph{inner} derivations. 

\begin{defn}\label{def:banachalgebraamenability}
    A Banach algebra $A$ is called: \begin{enumerate}
        \item \emph{amenable} if every bounded derivation from $A$ into a dual Banach $A$-bimodule is inner;
        \item \emph{weakly amenable} if every bounded derivation $D:A\to A^*$ is inner;
        \item \emph{contractible} if every bounded derivation $D:A\to E$, where $E$ is a Banach $A$-bimodule, is inner.
    \end{enumerate}
\end{defn}

\section{Restriction results on  \texorpdfstring{$A(\G)$}{A(G)} and  \texorpdfstring{$B(\G)$}{B(G)}}
\label{sec:generalrestrictionresults}
For a locally compact group $G$ and a closed subgroup $H$, the restriction map $A(G)\to A(H)$ is surjective \cite{Herz}. In the case of the Fourier-Stieltjes algebra, the restriction map $B(G)\to B(H)$ is well-defined, though it is not necessarily surjective (see, for example, \cite[pg. 92]{MahyaThesisPaper}). In this section, we focus on analogous results in the groupoid setting. 
Namely, we show that the restriction map $B(\G)\to \B(\G_u^u)$ is a continuous algebra homomorphism. When $\G$ is \'{e}tale, we show that the restriction map $A(\G)\to A(\G_u^u)$ is a continuous and surjective algebra homomorphism. Next, we demonstrate cases where the restriction map $B(\G)\to B(\G_u^u)$ is or is not surjective.

Let $\G$ be a locally compact Hausdorff groupoid. For every $u\in \unsp$, the space $\G_v^u$ equipped with the subspace topology is a locally compact Hausdorff space as well. In particular, $\G_u^u$ is a locally compact Hausdorff group for every $u\in \unsp$.
\begin{lem}\label{lem:measurelemma}
    Let $\G$ be a locally compact Hausdorff groupoid that admits a Haar system $\set{\lambda^u}_{u\in \unsp}$. For $u\in \unsp$, if $\lambda^u$ restricted to $\G_u^u$ is non-zero then it is a Haar measure for $\G_u^u$. 
\end{lem}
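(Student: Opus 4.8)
The plan is to verify the three defining conditions of a Haar measure for the locally compact group $H := \G_u^u$, namely that $\lambda^u|_H$ is a positive regular Borel measure, finite on compact sets, and left-invariant under translation by elements of $H$. Positivity and regularity are inherited from $\lambda^u$ restricted to the (locally) closed subset $H$ of $\G^u$; the fact that $H$ is closed in $\G^u$ follows from Hausdorffness, since $H = \G^u \cap s^{-1}(\{u\})$ and $s$ is continuous with $\{u\}$ closed. Finiteness on compact subsets of $H$ is immediate, since a compact subset of $H$ is a compact subset of $\G^u$ and $\lambda^u$ is a Borel measure with the defining property (condition \eqref{haarsystemfunction} of \Cref{def:HaarSystem} forces local finiteness, in particular finiteness on compacta). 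So the content is really left-invariance together with the assertion that, once nonzero, the restriction is automatically a genuine (nonzero) Haar measure — uniqueness of Haar measure on $H$ up to scalar then makes the statement unambiguous.

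The key step is left-invariance. Fix $x \in H = \G_u^u$; then $r(x) = s(x) = u$, so condition \eqref{haarsysteminvariance} of \Cref{def:HaarSystem} reads
$$\int_{\G^u} f(xz)\, d\lambda^u(z) = \int_{\G^u} f(z)\, d\lambda^u(z)$$
for all $f \in C_c(\G)$. I would like to conclude that the analogous identity holds for $f \in C_c(H)$ with the integral over $H$, i.e. that the pushforward of $\lambda^u|_H$ under left translation $L_x : H \to H$, $z \mapsto xz$, equals $\lambda^u|_H$. The subtlety is passing from test functions on $\G$ to test functions on $H$: given $\psi \in C_c(H)$, one needs to realize $\psi$ (or approximate it) as the restriction to $H$ of functions in $C_c(\G)$ in a way compatible with the measure. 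Since $H$ is closed in $\G^u$ and $\G^u$ is closed in $\G$ (Hausdorffness), and $\G$ is locally compact Hausdorff, Tietze/Urysohn-type extension gives $\tilde\psi \in C_c(\G)$ with $\tilde\psi|_H = \psi$; but $\tilde\psi$ may be supported off $H$ as well, so $\int_{\G^u} \tilde\psi\, d\lambda^u \neq \int_H \psi\, d\lambda^u$ in general. The cleaner route is to argue directly at the level of measures: translation invariance of $\lambda^u$ on all Borel subsets of $\G^u$ (which follows from \eqref{haarsysteminvariance} by a standard monotone-class / Riesz representation argument, since left translation by $x\in H$ is a homeomorphism of $\G^u$ onto itself fixing $H$ setwise) restricts to translation invariance of $\lambda^u|_H$ on Borel subsets of $H$, because $L_x(B \cap H) = L_x(B) \cap H$ for Borel $B \subseteq \G^u$. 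This sidesteps the extension issue entirely.

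Thus the skeleton is: (1) observe $H$ is closed in $\G^u$, hence $\lambda^u|_H$ is a well-defined positive regular Borel measure, finite on compacta; (2) for $x \in H$, use \eqref{haarsysteminvariance} with $r(x)=s(x)=u$ to get that left translation by $x$ preserves $\lambda^u$ as a measure on $\G^u$ — promoting the integral identity on $C_c(\G)$ to an identity of Borel measures via uniqueness in Riesz representation; (3) intersect with $H$, using that $L_x$ maps $H$ onto $H$ and commutes with intersection by $H$, to deduce $L_x{}_*(\lambda^u|_H) = \lambda^u|_H$; (4) conclude that $\lambda^u|_H$ is a left-invariant positive regular Borel measure on $H$ finite on compact sets, i.e. a left Haar measure, provided it is nonzero, which is the hypothesis. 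The main obstacle I anticipate is step (2)–(3): being careful that the invariance encoded in \eqref{haarsysteminvariance} — which a priori only compares integrals of $C_c(\G)$-functions over the \emph{whole} fiber $\G^u$ — genuinely localizes to the closed subset $H$, and handling the bookkeeping that left translation by $x \in \G_u^u$ is indeed a homeomorphism of $\G^u$ (it is, with inverse left translation by $x^{-1} \in \G_u^u$, using \Cref{prop:Sims2.1.4}) that stabilizes $H$. Everything else is routine measure theory.
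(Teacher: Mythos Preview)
Your proposal is correct and follows essentially the same strategy as the paper: verify directly that the restriction is regular, finite on compacta, and left-invariant. The emphasis is inverted, however: you treat regularity as automatically ``inherited'' and spend your effort on left-invariance, whereas the paper declares left-invariance immediate from \Cref{def:HaarSystem}\eqref{haarsysteminvariance} and devotes most of the proof to spelling out inner regularity, outer regularity, and finiteness on compacta (the latter via Urysohn and Tietze together with condition~\eqref{haarsystemfunction}); since inheritance of regularity under restriction to a closed subspace is exactly the content the paper works out, you should be aware that this step is not entirely free.
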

\begin{proof}
We denote $\lambda^u$ restricted to $\G_u^u$ by $\lambda_u^u$.
The left translation invariance of $\lambda_u^u$ follows directly from the analogous property of the Haar system $\{\lambda^u\}_{u\in\unsp}$, i.e., from \Cref{def:HaarSystem} (\ref{haarsysteminvariance}).
To prove the lemma, it remains to verify that $\lambda_u^u$ is both inner and outer regular, and finite on compact sets.

To show $\lambda_u^u$ is inner regular, consider an arbitrary open subset $O\subseteq \G_u^u$, and fix $\varepsilon>0$. Let $U$ be an open subset of $\G^u$ such that $O=U\cap \G_u^u$. As $\lambda^u$ is inner regular on $\G^u$, there is a compact set $C\subseteq U\subseteq \G^u$ with $\lambda^u(U\setminus C)\leq\varepsilon$. Given that $\G_u^u$ is closed in $\G^u$, we have $C\cap \G_u^u$ is compact as well. So,
    $$\lambda_u^u(O\setminus(C\cap \G_u^u))=\lambda_u^u((U\cap \G_u^u)\setminus(C\cap \G_u^u))=\lambda_u^u((U\setminus C)\cap\G_u^u)=\lambda^u((U\setminus C)\cap\G_u^u)\leq \varepsilon,$$
    which implies that $\lambda_u^u(O)\leq\lambda_u^u(C\cap \G_u^u)+\varepsilon$.
    So $\lambda_u^u$ is inner regular.

 For the outer regularity of $\lambda_u^u$, consider a Borel subset $E$ of $\G_u^u$, and fix $\varepsilon>0$.
 As $\lambda^u$ is an outer regular measure on $\G^u$, there is an open subset $O$ of $\G^u$ such that $\lambda^u(O)\leq \lambda^u(E)+\varepsilon$. Consider the set $V=O\cap \G_u^u$, which is open in $\G_u^u$. Clearly $E\subseteq V$, and we have
$$\lambda_u^u(V)=\lambda_u^u(O\cap \G_u^u)=\lambda^u(O\cap \G_u^u)\leq \lambda^u(O)\leq \lambda^u(E)+\varepsilon=\lambda_u^u(E)+\varepsilon.$$ 
Since $\varepsilon$ was arbitrary, we have that $\lambda_u^u$ is outer regular.

To prove that $\lambda_u^u$ is finite on compact sets, we first establish this property for $\lambda^u$. Let $K$ be a compact subset of $\G^u$ and fix $\varepsilon>0$. Since $\G^u$ is locally compact, and $\lambda^u$ is outer regular, there exists a precompact open subset $U$ of $\G^u$ containing $K$ with $\lambda^u(U\setminus K)<\varepsilon$. Next, we apply Urysohn's lemma (for locally compact Hausdorff spaces) to find a continuous and compactly supported function $f:\G^u\to [0,1]$ such that $f|_K=1$ and ${\rm supp}(f)\subseteq U$. By the Tietz extension theorem, we may extend this $f$ to an $\tilde{f}\in C_c^+(\G)$. By \Cref{def:HaarSystem} (\ref{haarsystemfunction}), the value $\int_{\G^u}\tilde{f}d\lambda^u$ is finite. This implies that $\lambda^u(K)$ must be finite, as we have
$$\lambda^u(K)=\int_K 1d\lambda^u\leq \int_{\G^u}\tilde{f}d\lambda^u=\tilde{f}^{0}(u)<\infty.$$
Finally, for every compact subset $C\subseteq \G_u^u$, we have
$\lambda_u^u(C)=\lambda^u(C)<\infty$, as compactness remains unchanged whether we consider $C$ as a subset of $\G^u$ or $\G^u_u$.

\end{proof}

\begin{defn}\label{def:condition*}
    A groupoid $\G$ with a Haar system $\set{\lambda^u}_{u\in \unsp}$ is said to \emph{satisfy Condition $(*)$} if for every $u\in \unsp$ the restriction of $\lambda^u$ to $\G_u^u$ is non-zero.
\end{defn}

 \begin{rmk}\label{rmk:condition*remark}
     \begin{enumerate}
         \item We require Condition $(*)$ to ensure the non-triviality of the spaces $L^2(\G_u^u,\lambda^u|_{\G_u^u})$. 
         \item Consider $X:=[0,1]$ with the Lebesgue measure, and set $\G:=X\times X$ to be the full equivalence relation groupoid on $X$. In this groupoid, $\G^{(x,x)}:=\G^x=\set{(x,y):y\in [0,1]}$. Each measure $\lambda^x$ on $\G^{x}$ will be just the Lebesgue measure. Then, for $x\in X$ we have $\G_x^x=\set{(x,x)}$ and $\lambda^{x}(\G_x^x)=0$, so $\lambda^x$ does not restrict to the Haar measure of $\G_x^x$.
         \item Discrete groupoids, group bundles with open range map, and \'{e}tale groupoids all satisfy Condition $(*)$. \cite[Lemma 1.3]{ToPDown} shows that group bundles with open range map have Haar systems, and \cite[Theorem 6.9]{ToolKit} demonstrates that any Haar system of such group bundles is essentially comprised of the Haar measures of the groups.
     \end{enumerate}
 \end{rmk}

\begin{prop}\label{thm:RestrictionTheorem}
        Let $\G$ be a locally compact, Hausdorff groupoid, and fix $u\in \unsp$. 
        \begin{enumerate}
            \item\label{item-0-codomain} Let $\fH$ be a continuous Hilbert field over the topological space $\unsp$, and $\calpi$ be a representation of $\G$ on $\fH$. Then the map 
            $$\G_u^u\to \U(\fH_u), \ x\mapsto \calpi(x)$$ 
            is a continuous unitary representation of $\G_u^u$. 
            \item\label{item-1-thm1} The map $R_{B(\G)}: B(\G)\to B(\G_u^u)$, $\varphi\mapsto \varphi|_{\G_u^u}$ is a contractive algebra homomorphism.
            \item\label{item-2-thm1} Let $\RA$ denote the map $R_{B(\G)}$ restricted to $A(\G)$. Then $\RA:A(\G)\to A(\G_u^u)$ is a contractive algebra homomorphism.
        \end{enumerate}
    \end{prop}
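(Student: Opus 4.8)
For part \eqref{item-0-codomain}, the plan is to check the two defining properties of a continuous unitary representation of the group $\G_u^u$ directly from \Cref{def:ghilbdle}. Since $r(x)=s(x)=u$ for every $x\in\G_u^u$, each $\calpi(x)$ is a unitary operator from $\fH_u$ to itself, so the map $x\mapsto\calpi(x)$ does land in $\U(\fH_u)$. The multiplicativity $\calpi(xy)=\calpi(x)\calpi(y)$ is inherited from condition (2) of \Cref{def:ghilbdle}, noting that any pair $(x,y)\in\G_u^u\times\G_u^u$ satisfies $s(x)=u=r(y)$ and hence lies in $\prodsp$. For continuity, I would fix $\xi,\eta\in\fH_u$ and produce $\bxi,\boldeta\in\Delta_b(\fH)$ with $\bxi(u)=\xi$, $\boldeta(u)=\eta$ — here one uses density (condition \eqref{density} of \Cref{def:BanachField}) to get fundamental sections taking prescribed values at $u$, truncated if necessary to be $\|\cdot\|_\Delta$-bounded — so that the group coefficient function $x\mapsto\innprod{\calpi(x)\xi,\eta}_{\fH_u}$ is exactly the restriction to $\G_u^u$ of the continuous function $x\mapsto\innprod{\calpi(x)\bxi(s(x)),\boldeta(r(x))}_{\fH_{r(x)}}$ from condition \eqref{coefficientcontinuity}; restriction of a continuous function to a subspace is continuous, so this is done.

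For part \eqref{item-1-thm1}, the key observation is that restriction of coefficient functions is compatible with the construction of coefficient functions for the restricted representation. Given $\varphi=\fH_{\bxi,\boldeta}\in B(\G)$, I would use part \eqref{item-0-codomain} to see that $\calpi|_{\G_u^u}$ is a unitary representation of $\G_u^u$ on $\fH_u$, and then observe that $\varphi|_{\G_u^u}$ equals the group coefficient function $(\calpi|_{\G_u^u})_{\bxi(u),\boldeta(u)}$, since for $x\in\G_u^u$ we have $s(x)=r(x)=u$. This shows $\varphi|_{\G_u^u}\in B(\G_u^u)$ and, since $\|\bxi(u)\|_{\fH_u}\le\|\bxi\|_\Delta$ and $\|\boldeta(u)\|_{\fH_u}\le\|\boldeta\|_\Delta$, taking the infimum over all representations of $\varphi$ as a coefficient function gives $\|\varphi|_{\G_u^u}\|_{B(\G_u^u)}\le\|\varphi\|_{B(\G)}$, i.e.\ $R_{B(\G)}$ is contractive. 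That it is an algebra homomorphism is immediate: restriction respects pointwise addition and multiplication, which are the operations in both $B(\G)$ and $B(\G_u^u)$.

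For part \eqref{item-2-thm1}, since $A(\G)$ is by definition a closed subalgebra of $B(\G)$ and $R_{B(\G)}$ is a bounded algebra homomorphism by \eqref{item-1-thm1}, the restriction $\RA=R_{B(\G)}|_{A(\G)}$ is automatically a contractive algebra homomorphism into $B(\G_u^u)$; the only real content is that its range lies in $A(\G_u^u)$ rather than merely in $B(\G_u^u)$. To see this, I would first check it on generators: for $f,g\in C_c(\G)$, the coefficient $\Lambda_{f,g}$ of the left regular representation of $\G$ restricts on $\G_u^u$ to a coefficient of the left regular representation of $\G_u^u$ — this is where Condition $(*)$ (via \Cref{lem:measurelemma}, guaranteeing $\lambda_u^u$ is a Haar measure of $\G_u^u$, so that $L^2(\G_u^u,\lambda_u^u)$ is the Hilbert space of the regular representation of $\G_u^u$) is used, and one must relate the restriction of $L^2(\G^u,\lambda^u)$-vectors to $L^2(\G_u^u,\lambda_u^u)$-vectors, presumably by restricting the functions $f,g$ to $\G_u^u$. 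So $\RA(\Lambda_{f,g})\in A_{cf}(\G_u^u)\subseteq A(\G_u^u)$; since $\RA$ is an algebra homomorphism, it maps the algebra $A_c(\G)$ generated by these into $A_c(\G_u^u)$, and since $\RA$ is norm-continuous and $A(\G_u^u)$ is closed, it maps $A(\G)=\overline{A_c(\G)}$ into $\overline{A_c(\G_u^u)}=A(\G_u^u)$.

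\textbf{Main obstacle.} I expect the delicate point to be the argument in part \eqref{item-2-thm1} that $\RA$ actually lands in $A(\G_u^u)$: one must carefully match up the left regular representation of $\G$ restricted to $\G_u^u$ with the left regular representation of the group $\G_u^u$, which requires knowing that $\lambda^u$ restricts to a Haar measure on $\G_u^u$ (hence the standing hypothesis that $\G$ is \'etale, or more generally satisfies Condition $(*)$), and tracking how the inner product $\innprod{L_xf^u,g^u}_{L^2(\G^u)}$ relates to the corresponding inner product over $\G_u^u$. In the \'etale case the Haar system is counting measure, so $L^2(\G_u^u,\lambda_u^u)=\ell^2(\G_u^u)$ sits naturally inside $\ell^2(\G^u)$ as functions supported on the fiber-isotropy, which should make the identification transparent; subtleties about whether the restriction of $f\in C_c(\G)$ to $\G_u^u$ captures the right vector, and whether one gets all of $A_{cf}(\G_u^u)$ or only a dense subset, are the things to handle with care.
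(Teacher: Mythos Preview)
Your plan for parts \eqref{item-0-codomain} and \eqref{item-1-thm1} matches the paper's proof essentially line for line; the paper streamlines the ``extend $\xi,\eta$ to bounded sections'' step by citing \cite[Proposition 10.1.10]{Dixmier} directly, but the idea is the same.

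For part \eqref{item-2-thm1}, however, you are working much harder than necessary, and your proposed route has a real gap. You want to show $\RA(\Lambda_{f,g})\in A_{cf}(\G_u^u)$ by ``restricting the functions $f,g$ to $\G_u^u$'', but this does not work: for $x\in\G_u^u$,
\[
\Lambda_{f,g}(x)=\int_{\G^u} f(x^{-1}z)\,\overline{g(z)}\,d\lambda^u(z),
\]
an integral over all of $\G^u$, not just $\G_u^u$, so it is not equal to the coefficient $(L_{\G_u^u})_{f|_{\G_u^u},\,g|_{\G_u^u}}(x)$. The restriction of $\Lambda$ to $\G_u^u$ is a representation on $L^2(\G^u)$, not on $L^2(\G_u^u)$, and relating the two requires the decomposition carried out later in the proof of \Cref{thm:denserange} (and genuinely uses the \'etale hypothesis). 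Your approach would thus prove only a weaker statement (under Condition $(*)$) and would need most of the work of \Cref{thm:denserange} to close.

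The paper bypasses all of this with a one-line observation you are missing: for a locally compact \emph{group} $G$ one always has $C_c(G)\cap B(G)\subseteq A(G)$. Since $A_c(\G)\subseteq C_c(\G)$ and $\G_u^u$ is closed in $\G$, restriction sends $C_c(\G)$ into $C_c(\G_u^u)$; combined with part \eqref{item-1-thm1} this gives
\[
R_{B(\G)}(A_c(\G))\subseteq C_c(\G_u^u)\cap B(\G_u^u)\subseteq A(\G_u^u),
\]
and continuity of $R_{B(\G)}$ then yields $\RA(A(\G))\subseteq A(\G_u^u)$. No Haar-measure comparison, no Condition $(*)$, no \'etale assumption is needed for this proposition.
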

\begin{rmk}
    The restriction maps $R_{B(\G)}$ and $\RA$ depend on $u$. To avoid the clutter of notation, we have chosen not to include $u$ in the name of these maps except when necessary.
\end{rmk}
\begin{proof}
       Clearly, the mapping $x\mapsto \calpi(x)$ is a group homomorphism $\G_u^u\to \U(\fH_u)$, which we denote by $\pi$. Next, we show that the map $x\mapsto \pi_{\xi,\eta}(x)$ is continuous for every $\xi,\eta\in \fH_u$. To do this, fix $\xi,\eta\in \fH_u$. By \cite[Proposition 10.1.10]{Dixmier}, there exist bounded continuous sections $\bxi,\boldeta$ such that $\bxi(u)=\xi$ and $\boldeta(u)=\eta$. By \Cref{def:ghilbdle}, the coefficient function $x\mapsto \fH_{\bxi,\boldeta}(x)$ is continuous on $\G$, thus its restriction to $\G_u^u$ will be continuous. It is easy to see that $\fH_{\bxi,\boldeta}|_{\G_u^u}=\pi_{\xi,\eta}$, proving $\pi_{\xi,\eta}$ is continuous on $\G^u_u$.

       By \eqref{item-0-codomain}, the domain and codomain of $R_{B(\G)}$ are as claimed. As the algebra operations in both domain and codomain are pointwise, the mapping will be an algebra homomorphism. 
       Next, we show that $R_{B(\G)}$ is contractive. Consider an element $\varphi\in B(\G)$ represented as $\varphi=\fH_{\bxi,\boldeta}$. Letting $\xi=\bxi(u)$ and $\eta=\boldeta(u)$, we have $\varphi|_{\G_u^u}=\pi_{\xi,\eta}$. So, we have
       $$\norm{R_{B(\G)}(\varphi)}=\norm{\pi_{\xi,\eta}}_{B(\G_u^u)}\leq \norm{\xi}\norm{\eta}\leq \left(\sup_{u\in \unsp}\norm{\bxi(u)}\right)\left(\sup_{u\in \unsp}\norm{\boldeta(u)}\right)\leq \|\bxi\|_\Delta\|\boldeta\|_\Delta.$$
       Since this inequality is true for arbitrary representations of $\varphi$ as coefficient functions, by the definition of the norm of $B(\G)$ we get
       $\norm{R_{B(\G)}(\varphi)}_{B(\G_u^u)}\leq \norm{\varphi}_{B(\G)}$, so $R_{B(\G)}$ is contractive.

       To prove \eqref{item-2-thm1}, we will first show that the map is well-defined, i.e., the image of $\RA$ is contained in $A(\G_u^u)$.
       It will then follow that $\RA:A(\G)\to A(\G_u^u)$ is a contractive algebra homomorphism.
        Recall that $A(\G)=\overline{A_c(\G)}^{\norm{\cdot}_{B(\G)}}$, and $A_c(\G)$ is a subalgebra of $C_c(\G)$. By \eqref{item-0-codomain}, we have
       $$R_{B(\G)}(A_c(\G))\subseteq R_{B(\G)}(C_c(\G)\cap B(\G))
       \subseteq C_c(\G_u^u)\cap B(\G_u^u)\subseteq A(\G_u^u),$$ 
       where, in the second inclusion, we used the fact that the restriction map sends elements of $C_c(\G)$ into $C_c(\G^u_u)$, since $\G_u^u$ is closed in $\G$. 
       Moreover, since $R_{B(\G)}$ is continuous, we have 
       $R_{B(\G)}(A(\G))\subseteq
       \overline{R_{B(\G)}(A_c(\G))}^{\norm{\cdot}_{B(\G)}}\subseteq A(\G_u^u).$
       So, $\RA$ is well-defined.
\end{proof}

To show surjectivity of $\RA$, we need to use the coefficient space of a particular representation. 
\begin{defn}\cite[Definition 1.5]{Arsac}\label{def:fouriercoefficientspace}
   For a continuous unitary representation $\pi$ of a locally compact group $G$, the coefficient space of $\pi$ is defined as 
   $$A_\pi(G):=\overline{\text{span}\set{\pi_{\xi,\eta}:\xi,\eta \in \Hil_\pi}},$$
   where the closure is taken with respect to the norm of $B(G)$.
\end{defn}

\begin{thm}\label{thm:denserange}
  Let $\G$ be a locally compact Hausdorff \'etale groupoid, and fix $u\in \unsp$. 
  The restriction map $\RA: A(\G)\to A(\G_u^u)$ is surjective.
\end{thm}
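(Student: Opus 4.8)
The plan is to use the inclusion $\G_u^u\subseteq\G^u$: given an $\ell^2$-vector on $\G^u$ supported in $\G_u^u$, I extend it to an element of $C_c(\G)$ whose section norm equals its $\ell^2$-norm, so that the corresponding coefficient of the left regular representation $\Lambda$ restricts on $\G_u^u$ to a prescribed coefficient of the left regular representation of the discrete group $\G_u^u$; an exact preimage is then obtained by successive approximation. First I would record what \'etaleness provides (using \Cref{assump-etale}): each fibre $\G^u$ is discrete and closed in $\G$, $\lambda^u$ is a counting measure, $L^2(\G^u,\lambda^u)=\ell^2(\G^u)$, and $\G_u^u$ is a discrete group whose counting Haar measure is $\lambda^u|_{\G_u^u}$; in particular Condition $(*)$ of \Cref{def:condition*} holds and, as for any discrete group, $A(\G_u^u)$ is the closed span of the coefficients $\langle\lambda_{\G_u^u}(\cdot)\zeta,\omega\rangle$ with $\zeta,\omega\in\ell^2(\G_u^u)$, each $\varphi\in A(\G_u^u)$ being realised as a single such coefficient with $\|\varphi\|_{A(\G_u^u)}=\|\zeta\|_2\|\omega\|_2$. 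Regarding $\ell^2(\G_u^u)$ as a subspace of $\ell^2(\G^u)$ via extension by zero, a direct check shows that for $x\in\G_u^u$ the operator $L_x$ of \Cref{exa:LeftRegularBundle} maps this subspace into itself and acts there exactly as $\lambda_{\G_u^u}(x)$ — indeed $s(x)=r(x)=u$, and if $h$ is supported in $\G_u^u$ then $h(x^{-1}z)\ne0$ forces $s(z)=s(x^{-1}z)=u$, i.e.\ $z\in\G_u^u$. Hence, for $f,g\in C_c(\G)$ with $f|_{\G^u},g|_{\G^u}$ supported in $\G_u^u$,
$$\Lambda_{f,g}(x)=\langle L_x(f|_{\G^u}),g|_{\G^u}\rangle_{\ell^2(\G^u)}=\langle\lambda_{\G_u^u}(x)(f|_{\G_u^u}),g|_{\G_u^u}\rangle_{\ell^2(\G_u^u)}\qquad(x\in\G_u^u),$$
so that $\Lambda_{f,g}|_{\G_u^u}$ is precisely the $\lambda_{\G_u^u}$-coefficient of $f|_{\G_u^u}$ and $g|_{\G_u^u}$.

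The technical core is the following controlled extension: for every finitely supported $\xi\in\ell^2(\G_u^u)$ there is $f\in C_c(\G)$ with $f|_{\G^u}=\xi$ and $\|f\|_\Delta=\|\xi\|_2$. Write $\xi=\sum_{i=1}^n a_i\delta_{x_i}$ with the $x_i\in\G_u^u$ distinct. Since the range map is a local homeomorphism (hence so is the source map), each $x_i$ has a precompact open bisection neighbourhood; as $\G$ is Hausdorff and there are finitely many $x_i$, these can be shrunk to pairwise disjoint precompact open bisections $V_i\ni x_i$. Using Urysohn's lemma, pick $h_i\in C_c(\G)$ with $0\le h_i\le1$, $h_i(x_i)=1$, $\supp h_i\subseteq V_i$, and set $f=\sum_i a_ih_i$. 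Because $V_i$ is a bisection, each fibre $\G^v$ meets $V_i$ in at most one point, so $\sum_{z\in\G^v}|f(z)|^2\le\sum_i|a_i|^2=\|\xi\|_2^2$ for every $v\in\unsp$, with equality at $v=u$ (there $V_i\cap\G^u=\{x_i\}$ and $f(x_i)=a_i$); thus $\|f\|_\Delta=\|\xi\|_2$, while clearly $f|_{\G^u}=\xi$. The bisection structure is exactly what keeps the $\ell^2$-mass of $f$ from spreading across fibres, and this is the one place where \'etaleness is genuinely used; I expect this estimate — and the verification that one can find pairwise disjoint precompact open bisections through the $x_i$ — to be the main obstacle, the rest being soft.

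To conclude, fix $\psi\in A(\G_u^u)$. By the standard argument deriving surjectivity from norm-controlled approximate surjectivity (using completeness of the Banach algebra $A(\G)$), it suffices to find, for each $\rho\in A(\G_u^u)$, an element $\sigma\in A(\G)$ with $\|\sigma\|_{A(\G)}\le\|\rho\|_{A(\G_u^u)}$ and $\|\sigma|_{\G_u^u}-\rho\|_{A(\G_u^u)}\le\tfrac12\|\rho\|_{A(\G_u^u)}$. Write $\rho=\langle\lambda_{\G_u^u}(\cdot)\zeta,\omega\rangle$ with $\|\zeta\|_2=\|\omega\|_2$ and $\|\zeta\|_2\|\omega\|_2=\|\rho\|_{A(\G_u^u)}$, and truncate $\zeta,\omega$ to finitely supported $\zeta',\omega'$ with $\|\zeta'\|_2\le\|\zeta\|_2$, $\|\omega'\|_2\le\|\omega\|_2$ and $\|\zeta-\zeta'\|_2,\|\omega-\omega'\|_2$ small enough that $\|\rho-\langle\lambda_{\G_u^u}(\cdot)\zeta',\omega'\rangle\|_{A(\G_u^u)}\le\tfrac12\|\rho\|_{A(\G_u^u)}$. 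Applying the controlled-extension lemma to $\zeta'$ and to $\omega'$ produces $f,g\in C_c(\G)$, and $\sigma:=\Lambda_{f,g}\in A_{cf}(\G)\subseteq A(\G)$ then satisfies $\|\sigma\|_{A(\G)}\le\|f\|_\Delta\|g\|_\Delta=\|\zeta'\|_2\|\omega'\|_2\le\|\rho\|_{A(\G_u^u)}$, while by the first paragraph $\sigma|_{\G_u^u}=\langle\lambda_{\G_u^u}(\cdot)\zeta',\omega'\rangle$, so $\|\sigma|_{\G_u^u}-\rho\|_{A(\G_u^u)}\le\tfrac12\|\rho\|_{A(\G_u^u)}$. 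Iterating this on $\rho,\ \rho-\sigma|_{\G_u^u},\dots$ and summing the resulting geometric series in $A(\G)$ produces $\varphi\in A(\G)$ with $\RA\varphi=\varphi|_{\G_u^u}=\psi$ and $\|\varphi\|_{A(\G)}\le2\|\psi\|_{A(\G_u^u)}$; hence $\RA$ is onto. I would also remark that, apart from the bisection estimate, the whole scheme applies to any locally compact groupoid satisfying Condition $(*)$ for which a comparable norm-controlled extension of $\ell^2(\G_u^u)$-vectors into $C_c(\G)$ is available.
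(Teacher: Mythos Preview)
Your argument is correct and takes a genuinely different route from the paper's. The paper works with the full restricted representation $\sigma:\G_u^u\to\mathcal U(L^2(\G^u))$, decomposes $L^2(\G^u)=\bigoplus_{v}\ell^2(\G_v^u)$, and uses right translation by a fixed $\gamma_v\in\G_v^u$ to show each summand carries a copy of $\lambda_{\G_u^u}$; quasi-equivalence then gives $A_\sigma(\G_u^u)=A(\G_u^u)$. To lift vectors from $L^2(\G^u)$ to continuous sections of $L^2(\G)$ with the same $\Delta$-norm the paper quotes Dixmier's abstract extension lemma (\cite[10.1.10]{Dixmier}), and to get exact preimages it invokes the Arsac-type series expansion $\varphi=\sum_n\sigma_{f_n,g_n}$ with $\|\varphi\|=\sum_n\|f_n\|_2\|g_n\|_2$. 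You instead single out the invariant subspace $\ell^2(\G_u^u)\subseteq\ell^2(\G^u)$ directly, build the norm-preserving lift by hand from disjoint bisections, and replace the Arsac series by a geometric-series iteration. Your approach is more elementary and self-contained---no Dixmier extension lemma, no Arsac theory---and makes the role of \'etaleness completely transparent at the bisection step. The paper's approach is more conceptual and isolates more cleanly what is needed beyond the \'etale case: only that the translations $U_v:\ell^2(\G_v^u)\to\ell^2(\G_u^u)$ be isometric (a right-invariance condition on $\lambda^u$), which is a sharper hypothesis than the vague ``comparable norm-controlled extension'' in your closing remark.
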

\begin{proof}
Let $\Lambda$ be the left regular bundle with linear actions $\set{L_x}_{x\in \G}$ on the continuous Hilbert field $\{L^2(\G^v)\}_{v\in\unsp}$. 
By \Cref{thm:RestrictionTheorem} \eqref{item-0-codomain}, the restriction of $\Lambda$ to $\G_u^u$ is a continuous unitary representation, which we denote by $\sigma$. Namely, 
$$\sigma:\G_u^u\to \U(L^2(\G^u)), \ (\sigma(x)f)(z)=f(x^{-1}z).$$ 
Note that $\RA(\Lambda_{F,G})=\sigma_{F^u,G^u}$. This fact implies that 
$\RA(A(\G))\subseteq A_\sigma(\G_u^u)$, since $\RA$ is a continuous algebra homomorphism. 

\begin{claim}\label{claim:claim1}
   $\RA:A(\G)\to A_\sigma(\G_u^u)$ is surjective. 
\end{claim}
\noindent \emph{Proof of \Cref{claim:claim1}:}
First, consider a coefficient function $\sigma_{f,g}$ of $\sigma$ associated with $f,g\in L^2(\G^u)$. By \cite[Proposition 10.1.10]{Dixmier}, there exist continuous sections $F,G$ of $\Lambda$ such that $F^u=f$, $G^u=g$, $\norm{F}_\Delta=\norm{f}_2$, and $\norm{G}_\Delta=\norm{g}_2$. It is easy to see that 
$\RA(\Lambda_{F,G})=\sigma_{f,g}$, i.e., $\sigma_{f,g}$ belongs to the range of $\RA$. The linearity of $\RA$ establishes that 
$$\text{span}\set{\sigma_{f,g}:f,g\in L^2(\G^u)}\subseteq \RA(A(\G)).$$

    Next, let $\varphi \in A_\sigma(\G_u^u)$ be arbitrary. Via \cite[Theorem 2.8.4]{Kaniuth}, there exist
    $\{f_n\}_{n\in {\mathbb N}},\{g_n\}_{n\in {\mathbb N}}\subseteq L^2(\G^u)$ satisfying the following:
    $$\varphi=\sum_{n=1}^\infty \sigma_{f_n,g_n}, \ \mbox{ and } \norm{\varphi}_{A_\sigma(\G_u^u)}=\sum_{n=1}^\infty\norm{f}_2\norm{g}_2.$$
    For each $\sigma_{f_n,g_n}$, we take the preimage $\Lambda_{F_n,G_n}$ as constructed above, and observe that
    $$\norm{\Lambda_{F_n,G_n}}_{A(\G)}\leq \norm{F_n}_\Delta \norm{G_n}_\Delta=\norm{f_n}_2\norm{g_n}_2.$$
    So, $\sum_{n=1}^\infty \norm{\Lambda_{F_n,G_n}}_{A(\G)}$ is convergent,  which implies that $\sum_{n=1}^\infty \Lambda_{F_n,G_n}$ converges in $A(\G)$. 
    Now by continuity of $\RA$, we get
    \begin{align*}
        R_{A(\G)}\of{\sum_{n=1}^\infty \Lambda_{F_n,G_n}}=
       \sum_{n=1}^\infty\sigma_{f_n,g_n}=\varphi,
    \end{align*}
  which implies that $A_\sigma(\G_u^u)\subseteq \RA(A(\G))$. 

To finish the proof, we only need to show that $A_\sigma(\G^u_u)=A(\G_u^u)$.
\begin{claim}\label{claim:claim2}
Let $L_{\G_u^u}:\G_u^u\to {\mathcal U}(L^2(\G_u^u))$ be the left regular representation of $\G_u^u$. We have the following unitary equivalence of representations:
    $$\sigma\simeq\oplus_{v:\G_v^u\neq \varnothing}L_{\G_u^u}.$$ 
\end{claim}
\noindent \emph{Proof of \Cref{claim:claim2}:}
Given that $\G$ is \'{e}tale, we have that $\G^u$ is discrete, and the Hilbert space $L^2(\G^u)$ can be written as an orthogonal direct sum $\oplus_{v:\G_v^u\neq \varnothing}L^2(\G_v^u)$.
Fix $v\in \unsp$ with $\G_v^u \neq \varnothing$. 
It is easy to see that $L^2(\G_v^u)$ is a closed, nonzero $\sigma$-invariant subspace of $L^2(\G^u)$.  Let $\sigma^{L^2(\G_v^u)}$ denote the restriction of $\sigma$ to this invariant subspace. 
Recall that when $\G$ is \'{e}tale, we fix the Haar system $\{\lambda^v\}_{v\in\unsp}$ for $\G$ to be just the collection of counting measures (\Cref{assump-etale}). So, by \Cref{lem:measurelemma}, the restriction of $\lambda^u$ to $\G_u^u$ is the Haar measure of $\G_u^u$.\footnote{Formally speaking, when $\G$ is \'etale, it satisfies Condition $(*)$; see \Cref{def:condition*}.} 
Consequently, the representation $\sigma^{L^2(\G_u^u)}$ is exactly the left regular representation of $\G_u^u$. 

We further claim that $\sigma^{L^2(\G_v^u)}\simeq L_{\G_u^u}$ for each $v$ with $\G_v^u\neq \varnothing$. Indeed, for each such $v$, fix $\gamma_v\in \G_v^u$, and define 
$$U_v:L^2(\G_v^u)\to L^2(\G_u^u), \ (U_vf)(z)=f(z\gamma_v).$$ 
Clearly $U_v$ is invertible. Moreover, since the function $\G_u^u\to \G_v^u$, $z\mapsto z\gamma_v$ is bijective, we have
$$\norm{U_vf}_2^2=\sum_{z\in \G_u^u}\abs{f(z\gamma_v)}^2=\sum_{z\in \G_v^u}\abs{f(z)}^2=\norm{f}_2^2.$$
This implies that $U_v$ is a surjective isometry, and thus, a unitary map. It is easy to verify that $(U_v \sigma^{L^2(\G_v^u)}_x U_v^{-1})(f)=\sigma^{L^2(\G_u^u)}_x f$, that is, the unitary operator $U_v$ intertwines $\sigma^{L^2(\G_v^u)}$ and $\sigma^{L^2(\G_u^u)}$.
Finally, since the spaces $L^2(\G_v^u)$ are orthogonal for distinct $v$, we get
$$\sigma\simeq\bigoplus_{v:\G_v^u\neq \varnothing}\sigma^{L^2(\G_v^u)}\simeq \bigoplus_{v:\G_v^u\neq \varnothing} L_{\G_u^u}.$$

So the two representations $\sigma$ and $L_{\G_u^u}$ are quasi-equivalent (by \cite[Proposition 5.3.1]{Dixmier}), which implies that $A_\sigma(\G_u^u)=A(\G_u^u)$
(by \cite[Proposition 2.8.12]{Kaniuth}).  
\end{proof}

\begin{rmk}
The condition that $\G$ is \'{e}tale is not strictly necessary, though we do not know if \Cref{thm:denserange} is true in full generality. We explicitly use the \'etale condition for two reasons. Firstly, to make sure that $\G$ satisfies Condition ($*$) in \Cref{def:condition*}; and secondly, to guarantee that the maps $U_v$ are isometric. In the latter consideration, one needs the measure $\lambda^u$ of the groupoid's Haar system to be both left and right invariant, that is, in addition to \Cref{def:HaarSystem} \eqref{haarsysteminvariance}, we need to have
$$\int_{\G^u}f(xz)\lambda^u(x)=\int_{\G^u}f(x)d\lambda^u(x)$$ 
for $f\in L^2(\G^u)$. This is certainly true for \'{e}tale groupoids, since the collection of counting measures is the chosen Haar system in this scenario. This condition will also be true for group bundles which have a Haar system for all cases of $u\in \unsp$ such that $\G^u$ is a unimodular group.
\end{rmk}

We end this section with an application of \Cref{thm:denserange} in understanding Banach algebra features of the Fourier algebra. A property $P$ of a Banach algebra ${\mathcal A}$ will be called \emph{hereditary} if it is preserved under continuous homomorphisms with dense range, i.e., if ${\mathcal A}$ has property $P$, and $\theta:{\mathcal A}\to {\mathcal B}$ is a continuous algebra homomorphism such that $\overline{\theta({\mathcal A})}={\mathcal B}$, then ${\mathcal B}$ has property $P$ as well. An easy example of hereditary properties is the existence of a bounded approximate identity in a Banach algebra.
Contractibility and amenability of Banach algebras, and weak amenability of commutative Banach algebras are all hereditary properties (see \cite[Proposition 2.8.64]{Dales}, \cite{BadeCurtisDalesPaper}, and \cite[Proposition 2.3.1]{Runde}).

\begin{cor}\label{cor:hereditary}
      Let $\G$ be a locally compact, Hausdorff \'{e}tale groupoid. Let $u\in \unsp$.
      \begin{itemize}
      \item[(i)] If $\G_u^u$ is not amenable, then $A(\G)$ does not contain a bounded approximate identity.
      \item[(ii)] If $A(\G_u^u)$ is not amenable (resp.~weakly amenable, resp.~contractible), then $A(\G)$ is not amenable (resp.~weakly amenable, resp.~contractible).
      \item[(iii)] If $\G_u^u$ contains a closed subgroup $H$ such that $A(H)$ is not (weakly) amenable, then $A(\G)$ is not (weakly) amenable.
      \end{itemize}
 \end{cor}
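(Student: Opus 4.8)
The plan is to combine the surjectivity of $\RA$ established in \Cref{thm:denserange} with the fact that the Banach algebraic properties under consideration are \emph{hereditary}, so that they pass from $A(\G)$ to its image $\RA(A(\G)) = A(\G_u^u)$; since $\RA$ is onto it certainly has dense range, so the three hereditary properties recorded in the discussion preceding the corollary are all available. In every part the strategy is the same contrapositive: assume $A(\G)$ has the relevant property, push it through $\RA$, and derive a contradiction with the hypothesis.

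For part (i), I would assume $A(\G)$ contains a bounded approximate identity. Existence of a bounded approximate identity is hereditary, so $A(\G_u^u)$ contains one as well; by Leptin's theorem \cite{Leptin} this forces $\G_u^u$ to be amenable, contrary to the hypothesis. For part (ii), I would first note that $A(\G)$ is commutative, being a closed subalgebra of the commutative Banach algebra $B(\G)$, and likewise $A(\G_u^u)$ is commutative (it is the Fourier algebra of the locally compact group $\G_u^u$); then amenability and contractibility (hereditary for all Banach algebras) and weak amenability (hereditary for commutative Banach algebras) all transfer from $A(\G)$ to $A(\G_u^u)$ through $\RA$, and the contrapositive gives the claim.

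For part (iii), I would use Herz's theorem \cite{Herz}: the restriction map $A(\G_u^u) \to A(H)$ of the group Fourier algebras is surjective, hence has dense range, and composing it with $\RA$ yields a continuous algebra homomorphism $A(\G) \to A(H)$ with dense range. If $A(\G)$ were (weakly) amenable, heredity would make $A(H)$ (weakly) amenable, a contradiction; here again one uses that all algebras involved are commutative so that weak amenability is hereditary.

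The proof is essentially bookkeeping, so there is no serious obstacle; the one point that needs care is that weak amenability is hereditary only in the commutative category, which is why recording the commutativity of $A(\G)$, $A(\G_u^u)$, and $A(H)$ is not a mere formality.
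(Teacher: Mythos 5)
Your proposal is correct and follows essentially the same route as the paper: surjectivity of $\RA$ from \Cref{thm:denserange} combined with heredity of the relevant properties, Leptin's theorem for (i), and composition with the surjective Herz restriction $A(\G_u^u)\to A(H)$ for (iii). Your explicit remark that weak amenability is hereditary only for commutative Banach algebras is a point the paper handles implicitly, and it is worth keeping.
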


\begin{proof}
By \Cref{thm:denserange}, the restriction map $\RA:A(\G)\to A(\G_u^u)$ is a continuous surjective algebra homomorphism when $\G$ is \'{e}tale. So it preserves hereditary properties such as contractibility, amenability and weak amenability (of commutative Banach algebras). This proves (ii).

To prove (i), note that by Leptin's theorem, when $\G_u^u$ is not amenable, the Fourier algebra $A(\G_u^u)$ does not admit a bounded approximate identity \cite{Leptin}. 
As the existence of a bounded approximate identity is hereditary, and $\RA:A(\G)\to A(\G_u^u)$ is a surjective algebra homomorphism, $A(\G)$ does not admit a bounded approximate identity as well.

Next, note that the restriction map $r_H:A(\G_u^u)\to A(H)$ is always surjective \cite{Herz}. So the composition $r_H\circ\RA: A(\G)\to A(H)$ is also a surjective algebra homomorphism, and thus, it preserves hereditary properties. So, the amenability or weak amenability of $A(\G)$ implies the corresponding property for $A(H)$. This proves (iii).
\end{proof}

\section{Surjectivity of \texorpdfstring{$R_{B(\G)}$}{R(B(G))} for certain classes of groupoids}
\label{sec:specificrestrictionresults}
Let $\G$ be a locally compact Hausdorff groupoid. In this section, we show that the restriction map $R_{B(\G)}$ is surjective if $\G$ is transitive with a continuous section (\Cref{thm:transitivesurjection}) or $\G$ is a group bundle with discrete unit space (\Cref{prop:bundlesurjection}). On the other hand, we discuss necessary conditions for surjectivity of $R_{B(\G)}$, when $\G$ is an HLS groupoid (\Cref{thm:HLS-not-surjective}), and present examples where the restriction map is not onto. 
\subsection{Transitive groupoids}
\label{sec:transitiverestrictions}
A groupoid is called \emph{transitive} if $\G_u^v\neq \varnothing$ for every pair $u,v\in \unsp$. In such groupoids, the restricted range map $r:\G_u\to \unsp$ is surjective for any choice of $u\in\unsp$. When $\G$ is a transitive groupoid, evaluation of coefficient functions of $\G$-Hilbert bundles will only depend on the actions of a subset of $\G$.

Let $\G$ be a transitive locally compact Hausdorff groupoid. Fix $u\in \unsp$. For each $v\in \unsp\setminus \{u\}$, fix an element $\gamma_v\in \G_u^v$. We set $\gamma_u:=u$. For every pair $v,w\in\unsp$, define the following map:
\begin{equation}\label{eq:fibermappings}
      \varphi_{v,w}:\G_u^u\to \G_v^w, \ \varphi_{v,w}(x)=\gamma_w x \gamma_v^{-1}. 
\end{equation}
By continuity of multiplication and inversion operations in $\G$, each map $\varphi_{v,w}$ is a homeomorphism from $\G_u^u$ to $\G_w^v$, and each map $\varphi_{v,v}$ is an isomorphism (of locally compact groups) from $\G_u^u$ to $\G_v^v$.

\begin{thm}\label{thm:transitivesurjection}
    Let $\G$ be a locally compact Hausdorff transitive groupoid. Suppose $\gamma:\unsp \to \G_u$ is a continuous section\footnote{It is an unfortunate coincidence that the terminology for this function matches that of $\G$-Hilbert bundle sections.} for the range map $r:\G_u\to \unsp$, i.e., $\gamma$ is continuous and $r\circ \gamma:\unsp \to \unsp$ is the identity map. Then for every $u\in \unsp$ the map $R_{B(\G)}: B(\G)\to B(\G_u^u)$ of \Cref{thm:RestrictionTheorem} is surjective. In particular $R_{B(\G)}:B(\G)\to B(\G_u^u)$ is surjective when $\unsp$ is discrete.
\end{thm}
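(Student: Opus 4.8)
The plan is to build an explicit ``lift'' $S_{B(\G)}\colon B(\G_u^u)\to B(\G)$ that is a right inverse of $R_{B(\G)}$, using the continuous section $\gamma$ and the homeomorphisms $\varphi_{v,w}$ of \eqref{eq:fibermappings} to transport a representation of the isotropy group $\G_u^u$ into a $\G$-Hilbert bundle. Concretely, given a continuous unitary representation $\pi\colon \G_u^u\to \U(\Hil)$ realizing $\psi=\pi_{\xi,\eta}\in B(\G_u^u)$, I would define a continuous Hilbert field $\fH$ over $\unsp$ by declaring $\fH_v:=\Hil$ for every $v\in\unsp$ (the constant field $C(\unsp,\Hil)$ of \Cref{exp-fields}\eqref{exa:contfield}), and define, for $x\in\G$, the operator $\calpi(x)\colon \fH_{s(x)}\to\fH_{r(x)}$ by $\calpi(x):=\pi\big(\gamma(r(x))^{-1}\,x\,\gamma(s(x))\big)$. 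This is well-defined because $\gamma(r(x))^{-1} x\,\gamma(s(x))$ starts and ends at $u$, hence lies in $\G_u^u$. One checks the cocycle identity using $r(xy)=r(x)$, $s(xy)=s(y)$ and the telescoping $\gamma(r(x))^{-1}x\gamma(s(x))\cdot\gamma(r(y))^{-1}y\gamma(s(y))=\gamma(r(xy))^{-1}(xy)\gamma(s(xy))$, since $s(x)=r(y)$. Each $\calpi(x)$ is unitary since $\pi$ is.

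Next I would verify the continuity axiom \eqref{coefficientcontinuity} of \Cref{def:ghilbdle}. For the constant field, every $\bxi\in\Delta_b(\fH)$ is a bounded continuous function $\unsp\to\Hil$, and the coefficient $x\mapsto\innprod{\calpi(x)\bxi(s(x)),\boldeta(r(x))}$ is the composition of the continuous maps $x\mapsto(\gamma(r(x))^{-1}x\gamma(s(x)),\,s(x),\,r(x))$ — continuous because $\gamma$, $r$, $s$, multiplication and inversion are all continuous — with the jointly continuous map $(g,v,w)\mapsto\innprod{\pi(g)\bxi(v),\boldeta(w)}$; joint continuity of the latter follows from strong continuity of $\pi$ together with continuity of $\bxi,\boldeta$. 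So $\calpi$ is a genuine $\G$-Hilbert bundle. Then for the constant sections $\bxi\equiv\xi$, $\boldeta\equiv\eta$ we have $\norm{\bxi}_\Delta=\norm{\xi}$, $\norm{\boldeta}_\Delta=\norm{\eta}$, so $\fH_{\bxi,\boldeta}\in B(\G)$ with $\norm{\fH_{\bxi,\boldeta}}_{B(\G)}\le\norm{\xi}\norm{\eta}$, and its restriction to $\G_u^u$ is $x\mapsto\innprod{\pi(\gamma(u)^{-1}x\gamma(u))\xi,\eta}$. Taking $\gamma$ so that $\gamma(u)=u$ (which one may arrange, or else absorb the inner automorphism, since $\gamma(u)\in\G_u^u$ and $\pi\circ(\text{conjugation by }\gamma(u))$ is unitarily equivalent to $\pi$), this restriction equals $\pi_{\xi,\eta}=\psi$. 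Hence $R_{B(\G)}$ is onto, and taking the infimum over representations of $\psi$ even shows it is a quotient map.

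Finally, for the last sentence: if $\unsp$ is discrete, I would exhibit a continuous section $\gamma$ for $r\colon\G_u\to\unsp$ for each $u$. Since $\G$ is transitive, $r\colon\G_u\to\unsp$ is onto, and for each $v\in\unsp$ one simply picks any $\gamma(v)\in\G_u^v$ (setting $\gamma(u)=u$); because $\unsp$ is discrete, every function out of $\unsp$ is automatically continuous, so $\gamma$ is a continuous section. The main obstacle I anticipate is the continuity axiom \eqref{coefficientcontinuity}: one must be careful that strong continuity of the group representation $\pi$ on the single Hilbert space $\Hil$ upgrades to joint continuity of $(g,v,w)\mapsto\innprod{\pi(g)\bxi(v),\boldeta(w)}$ on $\G_u^u\times\unsp\times\unsp$, which uses boundedness of $\bxi,\boldeta$ and a standard $\varepsilon/3$ argument; everything else is a routine check of the groupoid identities via \Cref{prop:Sims2.1.4}.
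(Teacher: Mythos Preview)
Your proposal is correct and follows essentially the same approach as the paper: build the constant Hilbert field with fibers $\Hil_\pi$, define $\calpi(x)=\pi(\gamma(r(x))^{-1}x\,\gamma(s(x)))$, verify continuity of coefficients via continuity of $\gamma$, $r$, $s$, multiplication, inversion and (weak/strong) continuity of $\pi$, and use constant sections $\bxi\equiv\xi$, $\boldeta\equiv\eta$ to recover $\pi_{\xi,\eta}$ on $\G_u^u$. The paper normalizes $\gamma(u)=u$ up front (as you also suggest), and your added remarks on the cocycle check, the discrete-$\unsp$ case, and the quotient-map norm estimate are all fine elaborations of the same argument.
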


\begin{proof}
Let $\fH$ be a $\G$-Hilbert bundle with linear actions $\set{\calpi(x)}_{x\in \G}$. Then for every $\bxi,\boldeta\in \Delta_b(\fH)$, the coefficient function $\fH_{\bxi,\boldeta}$ is determined by evaluations on $\G_u^u$ and $\set{\gamma_v}_{v\in \unsp}$ via
    $$\fH_{\bxi,\boldeta}(x)=\innprod{\calpi(\gamma_w^{-1}x\gamma_v)\\\calpi(\gamma_v^{-1})\bxi(s(x)),\calpi(\gamma_w)^*\boldeta(r(x))}$$
    where $w=r(x)$ and $v=s(x)$.

    Now, let $f\in B(\G_u^u)$, and suppose $f=\pi_{\xi_0,\eta_0}$ for some continuous unitary representation $\pi$ of $\G_u^u$. 
    To obtain a preimage of $f$, we first consider the constant Hilbert field $\fH$ with fibers $\fH_v:=\Hil_\pi$ for each $v\in \unsp$.
For $v\in \unsp$ set $\gamma_v=\gamma(v)$, and define $\varphi_{v,w}(x):=\gamma_w x\gamma_v^{-1}$ according to \Cref{eq:fibermappings}. Define $\calpi(x):=\pi(\varphi^{-1}_{s(x),r(x)}(x))$. 
It is easy to see that the map $x\mapsto \calpi(x)$ is a groupoid homomorphism.

As the Hilbert field of $\fH$ is a constant field, the continuous sections are just functions belonging to $C(\unsp,\Hil_\pi)$. 
Now, we show the continuity of the coefficients $\fH_{\bxi,\boldeta}$ for continuous sections $\bxi,\boldeta$. Let $\set{x_\alpha}_{\alpha\in I}$ be a net converging to $x$ in $\G$. 
Since the range and source maps $r,s:\G\to \unsp$ and the section map $\gamma:\unsp\to \G_u$ are continuous, we have that $\gamma_{r(x_\alpha)}\to \gamma_{r(x)}$ and $\gamma_{s(x_\alpha)}\to\gamma_{s(x)}$. So, we get
\begin{equation}\label{eq:conv-1}
\lim_{x_\alpha\to x}\varphi^{-1}_{s(x_\alpha),r(x_\alpha)}(x_\alpha)=\lim_{x_\alpha\to x}\gamma_{r(x_\alpha)}^{-1}x_\alpha \gamma_{s(x_\alpha)}=\gamma_{r(x)}^{-1}x \gamma_{s(x)}=\varphi^{-1}_{s(x),r(x)}(x).
\end{equation}
The continuity of $\bxi$ and $\boldeta$, together with \eqref{eq:conv-1}, gives that 
$$\lim_{x_\alpha\to x}\langle\pi(\varphi^{-1}_{s(x_\alpha),r(x_\alpha)}(x_\alpha))\bxi(s(x_\alpha)),\boldeta(r(x_\alpha))\rangle=\langle\pi(\varphi^{-1}_{s(x),r(x)}(x))\bxi(s(x)),\boldeta(r(x))\rangle,$$
since the unitary representation $\pi$ is WOT-continuous. This shows that $\fH_{\bxi,\boldeta}$ is continuous, showing that $\fH$ is a $\G$-Hilbert bundle. Consider the continuous sections $\bxi, \boldeta\in \Delta_b(\fH)$ defined as $\bxi(v)=\xi_0$ and $\boldeta(v)=\eta_0$ for every $v\in \unsp$. These continuous sections lead to the desired preimage as follows. Namely, for $x\in \G_u^u$, we have
    $$\fH_{\bxi,\boldeta}(x)=\innprod{\calpi(x)\xi_0,\eta_0}=\innprod{\pi(x)\xi_0,\eta_0}=\pi_{\xi_0,\eta_0}(x)=f.$$
\end{proof}

\subsection{Group bundles with discrete unit spaces}
\label{sec:discreteunitspacerestriction}
Let $X$ be an arbitrary set. For each $x\in X$, let $G_x$ be a locally compact Hausdorff group. Consider the group bundle formed by the collection $\set{G_x}_{x\in X}$ as defined in \Cref{exa:groupoidexample} (iii). This is the groupoid $\G=\cup_{x\in X}\set{x}\times G_x$ with a partial product $(x,g)(x,h)=(x,gh)$ and an inversion $(x,g)^{-1}=(x,g^{-1})$. As each $G_x$ embeds into $\G$, we simply regard $\G$ as the disjoint union of groups, that is, if $\zeta=(x,g)\in \G$ we simply write $\zeta\in G_x$. In addition, we identify the unit space of $\G$ with $X$ through $(x,e_{G_x})\mapsto x$. 

In this section, we assume that the group bundle $\G$ is equipped with a locally compact Hausdorff topology, and $\unsp$ is discrete. 
For every $x\in X$, let $\lambda^x$ denote the left Haar measure on $G_x$. It is easy to verify that the collection $\{\lambda^x\}_{x\in X}$ forms a Haar system on $\G$.

\begin{prop}\label{prop:bundlesurjection}
    Let $\G=\sqcup_{x\in X}G_x$ be a locally compact and Hausdorff group bundle with discrete unit space. For $u\in X$, let $R_{B(\G)}:B(\G)\to B(G_u)$ be the restriction map of \Cref{thm:RestrictionTheorem}. Then $R_{B(\G)}$ is surjective.
\end{prop}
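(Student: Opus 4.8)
The plan is to build a preimage of any $f\in B(G_u)$ by spreading the representation defining $f$ out over the whole (discrete) base $X$, in direct analogy with the transitive case \Cref{thm:transitivesurjection}, but now exploiting that distinct fibers $G_x$ never interact inside a group bundle. First I would take $f=\pi_{\xi_0,\eta_0}$ for some continuous unitary representation $\pi$ of $G_u$ on a Hilbert space $\Hil_\pi$. Since $\unsp=X$ is discrete, \Cref{exp-fields}\eqref{exp-discrete} tells us the constant field $\fH$ with $\fH_x=\Hil_\pi$ for all $x\in X$ has \emph{every} section continuous, so $\Gamma=\prod_{x\in X}\Hil_\pi$ and $\Delta_b(\fH)$ consists of all bounded sections. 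This removes the continuity-of-coefficients burden entirely.

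Next I would define the linear actions. Because $\G$ is a group bundle, $r(x)=s(x)$ for every $x\in\G$; write $x\in G_v$ when $x$ lies in the fiber over $v$. Set $\calpi(x):=I_{\Hil_\pi}$ whenever $x\in G_v$ with $v\neq u$, and $\calpi(x):=\pi(x)$ when $x\in G_u$. Since composable elements of $\G$ lie in a common fiber, the homomorphism property $\calpi(xy)=\calpi(x)\calpi(y)$ is immediate: on the fiber $G_u$ it is the homomorphism property of $\pi$, and on every other fiber it is $I\cdot I=I$. The coefficient-continuity condition \eqref{coefficientcontinuity} of \Cref{def:ghilbdle} is automatic here: for bounded sections $\bxi,\boldeta$, the map $x\mapsto\innprod{\calpi(x)\bxi(s(x)),\boldeta(r(x))}$ is a function on the topological disjoint-type space $\G$ whose restriction to $G_u$ is the (WOT-continuous) coefficient of $\pi$ and whose restriction to each $G_v$, $v\neq u$, is the constant-in-the-first-argument expression $\innprod{\bxi(v),\boldeta(v)}$; continuity on all of $\G$ then follows because $G_u$ is clopen in $\G$ (the base being discrete makes each fiber clopen). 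Thus $\fH$ together with $\set{\calpi(x)}$ is a genuine $\G$-Hilbert bundle.

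Finally I would produce the preimage exactly as in \Cref{thm:transitivesurjection}: let $\bxi,\boldeta\in\Delta_b(\fH)$ be the constant sections $\bxi(v)=\xi_0$, $\boldeta(v)=\eta_0$. These are bounded, with $\norm{\bxi}_\Delta=\norm{\xi_0}$ and $\norm{\boldeta}_\Delta=\norm{\eta_0}$. For $x\in G_u$ we compute
$$\fH_{\bxi,\boldeta}(x)=\innprod{\calpi(x)\bxi(s(x)),\boldeta(r(x))}=\innprod{\pi(x)\xi_0,\eta_0}=f(x),$$
so $R_{B(\G)}(\fH_{\bxi,\boldeta})=\fH_{\bxi,\boldeta}|_{G_u}=f$, which proves surjectivity. (One also gets $\norm{\fH_{\bxi,\boldeta}}_{B(\G)}\le\norm{\xi_0}\norm{\eta_0}$, so taking the infimum over representations of $f$ recovers that $R_{B(\G)}$ is a quotient map onto its range, consistent with contractivity from \Cref{thm:RestrictionTheorem}.)

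I do not anticipate a serious obstacle here; the group-bundle hypothesis together with the discreteness of $\unsp$ collapses every potential difficulty. The one point worth stating carefully is \emph{why} the candidate coefficient function is continuous on all of $\G$ — the clean justification is that discreteness of $X$ makes each fiber $G_v$ open (hence clopen) in $\G$, so a function on $\G$ is continuous iff its restriction to each fiber is, and each such restriction is either a coefficient of the continuous representation $\pi$ or a coefficient of the trivial representation on a constant field, both of which are continuous. If one wanted to avoid even that observation, the alternative is to invoke that, $X$ being discrete, $\G\cong\bigsqcup_{x\in X}G_x$ as a topological groupoid and $B(\G)$ decomposes accordingly, but the direct construction above is shorter and self-contained.
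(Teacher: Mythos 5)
Your proposal is correct and follows essentially the same route as the paper: build a representation of $\G$ that agrees with $\pi$ on the fiber $G_u$ and is trivial on every other fiber, note that discreteness of $X$ makes every section continuous and reduces coefficient continuity to a fiberwise check, and evaluate at constant sections through $\xi_0,\eta_0$. The only cosmetic difference is that the paper takes the off-fiber Hilbert spaces to be $\C$ with the trivial one-dimensional representation rather than your constant field $\Hil_\pi$ with identity operators; both work identically.
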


\begin{proof}
    Let $\pi$ be a unitary representation of $G_u$ and $\xi,\eta$ be fixed vectors in $\Hil_{\pi}$. We will show that $\pi_{\xi,\eta}\in R_{B(\G)}(B(\G))$. To this end, we construct a continuous field of Hilbert spaces $\fH$ and a groupoid representation $\calpi$ of $\G$ on $\fH$ as follows.
    
     First, we define the Hilbert bundle $\fH=\{\fH_z\}_{z\in X}$ by setting $\fH_u=\Hil_{\pi}$ and $\fH_z:={\mathbb C}$ for each $z\neq u$. Since $X$ is discrete, $\fH$ together with $\Gamma=\prod_{x\in X}\fH_x$ becomes a continuous Hilbert field (see \Cref{exp-fields} \eqref{exp-discrete}). 
     Next, we consider the trivial representation $\pi^x:G_x\to {\mathbb T}$, for $x\neq u$,  mapping every group element to 1. We set $\pi^u=\pi$. We can now define the following representation of $\G$:
     \begin{eqnarray}
         \calpi((z,g_z)):=\pi^{z}(g_z)=\left\{
         \begin{array}{ll}
             1 & \mbox{ when } z\neq u\\
             \pi(g_z) & \mbox{ when } z=u
         \end{array}\right..
     \end{eqnarray}
    Clearly, $\calpi$ is a groupoid homomorphism. Next, we show that each coefficient function of $\calpi$ is continuous. Indeed, let ${\bxi,\boldeta}$ be continuous sections of $\fH$, and consider $\calpi_{\bxi,\boldeta}$. Let $\set{x_\alpha}_{\alpha\in I}$ be a net that converges to $x$ in $\G$. By continuity of the range map, $\set{r(x_\alpha)}_{\alpha\in I}$ converges to $r(x)$. This implies that $\set{r(x_\alpha)}_{\alpha\in I}$ is eventually constant, as $\unsp$ is discrete. So the continuity of $\calpi$ follows from the continuity of coefficient functions of each representation $\pi^z$. 

    Finally, let $\bxi$ and $\boldeta$ be continuous bounded sections satisfying $\bxi(u)=\xi$ and $\boldeta(u)=\eta$. (Note that since $X$ is discrete, any section will be a continuous section).  
    Then, we have $R_{B(\G)}(\calpi_{\bxi,\boldeta})=\pi_{\xi,\eta}$, and the restriciton map is surjective.
\end{proof}

\subsection{HLS groupoids}\label{sec:HLSGroupoids}
Next, we study a particular class of group bundles called the HLS\footnote{HLS stands for Higson, Lafforgue, and Skandalis, the authors who originated the construction.} groupoids, originally appearing in \cite{Skandalis}. 
Let $G$ be a discrete group. An \emph{approximating sequence} for $G$ is a sequence $\set{K_n}_{n\in \N}$ of normal subgroups of finite index in $G$ such that $K_n \supseteq K_{n+1}$ for all $n$, and $\bigcap_n K_n=\set{e_G}$. 
For every $n\in\N$, let $G_n$ denote the finite quotient group $G/K_n$, and $q_n: G\to G_n$ be the canonical quotient map. We set $G_\infty:=G$ and $q_\infty$ to be the identity map. We define $\N_*=\N\cup \set{\infty}$ to be the one point compactification of $\N$.

\begin{defn}\cite[Definition 2.1]{Willett}\label{def:hlsgroupoid}
Let $G$ be a discrete group with an approximating sequence $\{K_n\}_{n\in \N}$.
The HLS groupoid associated to $(G,\set{K_n}_{n\in \N})$ is the group bundle
    $$\G:=\sqcup_{n\in \N_*}\set{n}\times G_n,$$
    equipped with the topology generated by the following open sets:
    \begin{enumerate}
        \item the singleton $\set{(n,g)}$, for each $n\in \N$ and $g\in G_n$;
        \item the set $\ray_n(g):=\set{(m,\pi_m(g)): m\in \N_*, m\geq n}$ for each $g\in G$ and $n\in \N$.
    \end{enumerate}
\end{defn}
We refer to sets of the second type as \emph{rays}, although \cite{Alekseev} refers to these sets as tails. It is easy to see that every ray in an HLS groupoid is open, closed, and compact. Consequently, HLS groupoids are locally compact and Hausdorff.

The main results of this section are based on the concept of weak containment, which we recall now.
Let $S$ and $T$ be sets of continuous unitary representations of a locally compact group $G$. Every continuous unitary representation $\pi$ of $G$ extends to a non-degenerate $*$-representation of the group C*-algebra, namely $C^*(G)$, which we denote by $\pi$ again. Let $\ker_{C^*}(\pi)$ denote the kernel of the extended $*$-representation $\pi$. We say \emph{$S$ is weakly contained in $T$}, denoted $S\prec T$, if $\cap_{\tau\in T} \ker_{C^*}(\tau) \subseteq \cap_{\sigma\in S} \ker_{C^*}(\sigma)$ (see e.g., \cite[Definition 1.7.2]{Kaniuth}).

\begin{prop}\label{prop:HLSweakcontainment}
Let $\G$ be the HLS groupoid associated to a discrete group $G$ with an approximating sequence $\set{K_n}_{n\in \N}$.
Let $\fH$ be a continuous Hilbert field, and $\calpi$ be a representation of $\G$ on $\fH$. For $n\in \N_*$, we denote by $\pi^n$ the representation obtained from restricting $\calpi$ to the fiber $G_n$ (\Cref{thm:RestrictionTheorem}~\eqref{item-0-codomain}). Then, the representation $\pi^\infty$ is weakly contained in the set of representations $\set{\pi^n\circ q_n :n\in \N}$.
\end{prop}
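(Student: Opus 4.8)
The plan is to pass from representations of the groupoid to $*$-representations of $C^*(G)$ and show the required containment of kernels directly. Fix $a\in \bigcap_{n\in\N}\ker_{C^*}(\pi^n\circ q_n)$; I want to show $a\in\ker_{C^*}(\pi^\infty)$. By density it suffices to treat $a$ a finitely supported function on $G$ (an element of $\C[G]\subseteq C^*(G)$), say $a=\sum_{g\in F}c_g\,\delta_g$ for a finite set $F\subseteq G$. The key observation is that for each such $g$ the point $(\infty,g)\in\G$ (identifying $G_\infty=G$) is a limit of the sequence $(n,q_n(g))$ as $n\to\infty$, because $\ray_n(g)$ is a basic neighborhood of $(\infty,g)$ and every ray $\ray_m(g)$ with $m\le n$ eventually contains $(n,q_n(g))$; more precisely $(n,q_n(g))\to(\infty,g)$ in the topology of \Cref{def:hlsgroupoid}.

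Next I would exploit the continuity condition \eqref{coefficientcontinuity} in \Cref{def:ghilbdle}: for any two bounded continuous sections $\bxi,\boldeta\in\Delta_b(\fH)$ the coefficient function $x\mapsto \fH_{\bxi,\boldeta}(x)=\innprod{\calpi(x)\bxi(s(x)),\boldeta(r(x))}$ is continuous on $\G$. Since the unit space $\N_*$ is the one-point compactification of $\N$ and $\fH_\infty$ is a fixed Hilbert space, \cite[Proposition 10.1.10]{Dixmier} lets me realize any pair of vectors $\xi,\eta\in\fH_\infty$ as $\bxi(\infty),\boldeta(\infty)$ for bounded continuous sections. Then for $g\in G$,
\[
\innprod{\pi^\infty(g)\xi,\eta}_{\fH_\infty}=\fH_{\bxi,\boldeta}\big((\infty,g)\big)=\lim_{n\to\infty}\fH_{\bxi,\boldeta}\big((n,q_n(g))\big)=\lim_{n\to\infty}\innprod{\pi^n(q_n(g))\bxi(n),\boldeta(n)}_{\fH_n}.
\]
Summing against the coefficients $c_g$, this says $\innprod{\pi^\infty(a)\xi,\eta}=\lim_{n}\innprod{(\pi^n\circ q_n)(a)\,\bxi(n),\boldeta(n)}$. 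Because $a\in\ker_{C^*}(\pi^n\circ q_n)$ for every $n$, the right-hand side is identically $0$ along the (finitely supported) representative, hence $\innprod{\pi^\infty(a)\xi,\eta}=0$. As $\xi,\eta\in\fH_\infty$ were arbitrary, $\pi^\infty(a)=0$ on the dense subalgebra $\C[G]$, and by norm-continuity of the extension $\pi^\infty(a)=0$ for all $a$ in the intersection of kernels, which is exactly the statement $\pi^\infty\prec\{\pi^n\circ q_n:n\in\N\}$.

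One technical point to verify carefully: $\pi^n$ is a priori a representation of the finite group $G_n=G/K_n$, and "$\pi^n\circ q_n$" means the inflated representation of $G$; the identity $\fH_{\bxi,\boldeta}((n,q_n(g)))=\innprod{(\pi^n\circ q_n)(g)\bxi(n),\boldeta(n)}$ follows immediately from unwinding definitions, since the fiber of $\G$ over $n$ is $G_n$ and the element $(n,q_n(g))$ corresponds to $q_n(g)\in G_n$. I also need that replacing an arbitrary $a\in C^*(G)$ by a finitely supported approximant is harmless: a standard $\varepsilon/3$ argument using $\|\pi^\infty\|\le 1$ and $\|\pi^n\circ q_n\|\le 1$ reduces the general case to the finitely supported one, so it is enough to run the computation above on $\C[G]$.

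\textbf{Main obstacle.} The only genuinely nontrivial ingredient is the convergence $(n,q_n(g))\to(\infty,g)$ together with the interchange of the (finite) sum over $F$ with the limit $n\to\infty$ — and both are easy here: the sum is finite, and the convergence is immediate from the description of the rays as a neighborhood basis at $(\infty,g)$ in \Cref{def:hlsgroupoid}. Thus I expect the proof to be short; the slight care needed is just in (i) checking that the sections can be chosen bounded and continuous with prescribed values at $\infty$, which is \cite[Proposition 10.1.10]{Dixmier}, and (ii) noting that the continuity hypothesis \eqref{coefficientcontinuity} of \Cref{def:ghilbdle} applies to these sections. No deeper structural input about HLS groupoids or about weak containment is required beyond the definitions.
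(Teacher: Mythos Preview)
Your argument is correct and rests on the same core observation as the paper's proof: the convergence $(n,q_n(g))\to(\infty,g)$ in the HLS topology combined with continuity of coefficient functions $\fH_{\bxi,\boldeta}$, after extending vectors in $\fH_\infty$ to bounded continuous sections via \cite[Proposition 10.1.10]{Dixmier}. The only difference is packaging: the paper invokes Fell's criterion for weak containment (\cite[Proposition 1.7.6]{Kaniuth}) and shows that each diagonal coefficient $\pi^\infty_{\zeta,\zeta}$ is a pointwise limit of the functions $(\pi^n\circ q_n)_{\bxi(n),\bxi(n)}$, whereas you work directly with the kernel-containment definition and an $\varepsilon/3$ argument. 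Both routes are short; yours avoids citing the equivalence of the two formulations of weak containment, at the cost of the approximation step.

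One phrasing to tighten: the sentence ``by density it suffices to treat $a$ a finitely supported function'' is misleading as stated, since an element of $\bigcap_n\ker_{C^*}(\pi^n\circ q_n)$ need not be approximable by elements of $\C[G]$ lying in that same intersection. Your later $\varepsilon/3$ paragraph is the correct fix---approximate $a$ by $a'\in\C[G]$, note $\|(\pi^n\circ q_n)(a')\|=\|(\pi^n\circ q_n)(a'-a)\|<\varepsilon$, and bound $|\langle\pi^\infty(a)\xi,\eta\rangle|$ by $\varepsilon(\|\xi\|\|\eta\|+\|\bxi\|_\Delta\|\boldeta\|_\Delta)$---so just move that reasoning up front and drop the line ``$\pi^\infty(a)=0$ on the dense subalgebra $\C[G]$'', which does not follow from what you computed.
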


\begin{proof}
    For each $n\in\N_*$, let $\Hil_{\pi^n}$ denote the associated Hilbert space.
    We proceed via \cite[Proposition 1.7.6]{Kaniuth} whereby we will demonstrate that for any vector $\zeta\in \Hil_{\pi^\infty}$, there is a sequence $\set{\psi_n}_{n\in \N}$ in $B(G_\infty)$ such that:
    \begin{enumerate}
        \item\label{item:weak-lem-1} $\psi_n\to \pi^{\infty}_{\zeta,\zeta}$ uniformly on all compact subsets $K\subseteq G_\infty$;
        \item\label{item:weak-lem-2} Each $\psi_m$ is a linear combination of elements of $\set{(\pi^n\circ q_n)_{\xi_n,\xi_n}:\xi_n\in \Hil_{\pi^n}, n\in \N}$.
    \end{enumerate}
Since every compact subset of the discrete group $G_\infty$ is finite, we can update \eqref{item:weak-lem-1} and require a sequence $\{\psi_n\}$ that converges pointwise to $\pi^\infty_{\zeta,\zeta}$.

    By \cite[Proposition 10.1.10]{Dixmier}, there exists a continuous section $\bxi\in \Delta_b(\fH)$ with $\bxi(\infty)=\zeta$. 
    For $x\in G_m$, we define $$\psi_m(x):=\innprod{\pi^m(q_m(x))\bxi(s(x)),\bxi(r(x))}.$$ 
    Clearly, $\psi_n(x)=(\pi^n\circ q_n)_{\bxi(n),\bxi(n)}$ for every $n\in\N$.
    Now, fix $z\in G_\infty$, and note that $\set{(n,q_n(z))}_{n\in \N_*}$ converges to $(\infty,z)$ in $\G$. Since the coefficient function $\fH_{\bxi,\bxi}$ is continuous at $(\infty,z)$, we have 
    \begin{eqnarray*}
    \lim_{n\to \infty}\psi_n(z)&=&\lim_{n\to \infty}\innprod{\pi^n(q_n(z))\bxi(n),\bxi(n)}\\
    &=&\lim_{n\to \infty}\calpi_{\bxi,\bxi}((n,q_n(z))\\
    &=&\calpi_{\bxi,\bxi}((\infty,z))\\
    &=&\innprod{\pi^\infty(z)\bxi(\infty),\bxi(\infty)}
=\pi^{\infty}_{\zeta,\zeta}(z).
    \end{eqnarray*}
    This finishes the proof.
\end{proof}
\begin{rmk}\label{rmk-admitting-approx-seq}
Every residually finite and finitely generated group admits an approximating sequence; see \cite[Lemma 2.8]{Willett} for a proof for $\F_2$, and \cite{DG-HLS} for a straightforward generalization to all residually finite and finitely generated groups.
\end{rmk}

Let $G$ be a residually finite group. Set $\widetilde{G}$ to denote the collection of (equivalence classes of) continuous unitary representations of $G$ and $\widetilde{G}_F$ the subset of representations that factor through a finite quotient. A group will \emph{have property FD} if for every $\pi \in \widetilde{G}$, $\pi$ is weakly contained in the set $\widetilde{G}_F$ (see \cite[pg. 2]{Lubotzky}). Equivalently, $G$ has property FD if $\widetilde{G}_F$ is dense in $\widetilde{G}$ in the Fell topology. 

\begin{thm}\label{thm:HLS-not-surjective}
    Let $\G$ be the HLS groupoid associated to a discrete group $G$ with an approximating sequence $\set{K_n}_{n\in \N}$.
    Consider the restriction map $R_{B(\G)}:B(\G)\to B(G_\infty)$ of \Cref{thm:RestrictionTheorem}. If $R_{B(\G)}$ is surjective, then $G=G_\infty$ has property FD. 
\end{thm}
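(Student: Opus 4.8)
The plan is to transfer the surjectivity hypothesis for $R_{B(\G)}$ into the statement that every continuous unitary representation of $G=G_\infty$ is weakly contained in the representations that factor through finite quotients, i.e.\ into property FD. First I would recall \Cref{prop:HLSweakcontainment}: for any $\G$-Hilbert bundle $\fH$ with representation $\calpi$, the fiber representation $\pi^\infty$ of $G_\infty$ is weakly contained in $\set{\pi^n\circ q_n: n\in\N}$, and each $\pi^n$ is a (finite-dimensional) representation of the finite quotient $G_n=G/K_n$, so $\pi^n\circ q_n$ factors through a finite quotient of $G$. Hence every such $\pi^\infty$ automatically lies in the weak closure of $\widetilde{G}_F$. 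The issue is that, a priori, not every continuous unitary representation of $G_\infty$ need arise as the $\infty$-fiber of a $\G$-Hilbert bundle on the HLS groupoid — and this is precisely where the surjectivity hypothesis is needed.

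So the key step is: \emph{assuming $R_{B(\G)}$ is surjective, show that every continuous unitary representation $\pi$ of $G_\infty$, with coefficient functions realizing a dense enough subspace, appears (up to quasi-equivalence / weak equivalence) as the restriction of a $\G$-representation to the $\infty$-fiber.} Concretely, fix a continuous unitary representation $\pi$ of $G$ and a unit vector $\zeta$, giving a positive-definite coefficient $\varphi=\pi_{\zeta,\zeta}\in B(G_\infty)$. By surjectivity of $R_{B(\G)}$ there is $\Phi\in B(\G)$ with $\Phi|_{G_\infty}=\varphi$; write $\Phi=\fH_{\bxi,\boldeta}$ for some $\G$-Hilbert bundle $\fH$ and bounded continuous sections $\bxi,\boldeta$. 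Restricting to the $\infty$-fiber, \Cref{thm:RestrictionTheorem}~\eqref{item-0-codomain} gives a continuous unitary representation $\pi^\infty$ of $G_\infty$ on $\fH_\infty$ with $\varphi = \pi_{\zeta,\zeta} = (\pi^\infty)_{\bxi(\infty),\boldeta(\infty)}$. Thus $\varphi\in A_{\pi^\infty}(G)$, and by \Cref{prop:HLSweakcontainment} we have $\pi^\infty\prec \set{\pi^n\circ q_n:n\in\N}\subseteq \widetilde{G}_F$ in the sense of weak containment; hence $\varphi$ lies in the weak-$*$ (Fell) closure of the coefficients coming from $\widetilde{G}_F$.

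To conclude property FD I would then argue at the level of $C^*(G)$: property FD is equivalent to the statement that $\bigcap_{\rho\in\widetilde{G}_F}\ker_{C^*}\rho=\{0\}$, i.e.\ that the finite-dimensional representations separate points of $C^*(G)$. Since $G$ is residually finite (it admits an approximating sequence, so $\bigcap_n K_n=\{e\}$), one already knows the finite quotient representations separate \emph{group} elements; the content of FD is separating all of $C^*(G)$, equivalently that every $\pi\in\widetilde G$ is weakly contained in $\widetilde G_F$. Running the previous paragraph over all $\pi\in\widetilde G$ and all pairs of vectors (using that coefficients $\pi_{\xi,\eta}$ span a dense subspace of $A_\pi(G)$, together with the fact that weak containment of a representation is detected by its diagonal coefficients $\pi_{\zeta,\zeta}$ via \cite[Proposition 1.7.6]{Kaniuth}) shows every $\pi_{\zeta,\zeta}$ is a Fell-limit of coefficients of representations in $\widetilde G_F$, which is exactly $\pi\prec\widetilde G_F$. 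Therefore $G$ has property FD.

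The main obstacle I anticipate is the bookkeeping in the middle step: going from ``$\varphi=R_{B(\G)}(\Phi)$ for \emph{some} $\Phi\in B(\G)$'' to ``$\pi$ itself (not just the single coefficient $\varphi$) is weakly contained in $\widetilde G_F$.'' Surjectivity only hands us \emph{a} preimage, and the $\infty$-fiber representation $\pi^\infty$ of that preimage need not be $\pi$ — it only needs to have $\varphi$ among its coefficients, so $\pi\prec\pi^\infty$ (or at least $\varphi\in A_{\pi^\infty}(G)$). One must therefore be careful to phrase everything in terms of weak containment and coefficient spaces rather than literal unitary equivalence, and to invoke the right characterization (diagonal coefficients / functions of positive type, as in \cite[Proposition 1.7.6]{Kaniuth} and \cite[Proposition 1.7.2]{Kaniuth}) so that weak containment of $\pi$ in $\set{\pi^n\circ q_n}$ follows from the pointwise approximation of $\pi_{\zeta,\zeta}$. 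Once that is set up cleanly, transitivity of weak containment ($\pi\prec\pi^\infty\prec\set{\pi^n\circ q_n}\prec\widetilde G_F$) closes the argument.
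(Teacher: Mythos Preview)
Your proposal is correct and follows essentially the same route as the paper: lift an arbitrary coefficient of $\pi$ via surjectivity of $R_{B(\G)}$, restrict the resulting $\G$-bundle to the $\infty$-fiber to obtain some $\pi^\infty$ having that coefficient, invoke \Cref{prop:HLSweakcontainment} to get $\pi^\infty\prec\widetilde{G}_F$, and conclude $\pi\prec\widetilde{G}_F$ via the coefficient-function characterization of weak containment. Your emphasis on diagonal coefficients $\pi_{\zeta,\zeta}$ and the explicit invocation of transitivity $\pi\prec\pi^\infty\prec\widetilde{G}_F$ is slightly more careful than the paper's phrasing (which works with general $\pi_{\xi,\eta}$ and leaves the transitivity implicit), but the argument is the same; one harmless slip is calling $\pi^n$ ``finite-dimensional'' --- the fiber $\fH_n$ need not be, but all that matters is that $\pi^n\circ q_n$ factors through the finite quotient $G_n$.
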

\begin{proof}
Recall that $\widetilde{G_n}$ and $\widetilde{G}$ denote the collection of continuous unitary representations of $G_n$ and $G$ respectively.
Suppose  $R_{B(\G)}$ is surjective, and let $\pi$ be an arbitrary unitary representation of $G$. We will show that $\pi$ is weakly contained in 
$${\mathcal F}:=\bigcup_{n\in\N}\set{\pi\circ q_n:\pi \in \widetilde{G}_n}.$$ 
To see this, let $\xi,\eta\in \Hil_\pi$ be arbitrary, and consider the coefficient function $\pi_{\xi,\eta}\in B(G)$. As $R_{B(\G)}$ is surjective, there is a groupoid representation $\calpi$ on a continuous Hilbert field $\fH$, and continuous sections $\bxi,\boldeta\in \Delta_b(\fH)$ such that 
$$R_{B(\G)}(\fH_{\bxi,\boldeta})=\pi_{\xi,\eta}.$$
Let $\pi^\infty$ denote the representation formed by restricting $\calpi$ to $G_\infty$ (as described in \Cref{thm:RestrictionTheorem} \eqref{item-0-codomain}). Clearly, 
$$R_{B(\G)}(\fH_{\bxi,\boldeta})=\pi^\infty_{\bxi(\infty),\boldeta(\infty)}=\pi_{\xi,\eta}.$$ By \Cref{prop:HLSweakcontainment}, $\pi^\infty$ is weakly contained in ${\mathcal F}$. Therefore, $\pi^\infty_{\bxi(\infty),\boldeta(\infty)}$ 
can be written as uniform limit on compact sets of linear combinations of coefficient functions of representations in ${\mathcal F}$. So, the same holds for $\pi_{\xi,\eta}$, and $\pi$ is weakly contained in ${\mathcal F}$ as well. 
\end{proof}

\begin{cor}\label{cor:surjectivecounterexample}
    There exist HLS groupoids for which $R^\infty_{B(\G)}$ is not surjective.
\end{cor}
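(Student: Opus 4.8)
The plan is to obtain the corollary as the contrapositive of \Cref{thm:HLS-not-surjective}. By that theorem, if the restriction map $R^\infty_{B(\G)}\colon B(\G)\to B(G_\infty)$ associated with the HLS groupoid of a pair $(G,\set{K_n}_{n\in\N})$ were surjective, then $G=G_\infty$ would have property FD. Hence it suffices to exhibit a single discrete group $G$ which (i) admits an approximating sequence $\set{K_n}_{n\in\N}$, so that the HLS groupoid $\G=\sqcup_{n\in\N_*}\set{n}\times G_n$ of \Cref{def:hlsgroupoid} is even defined, and (ii) does \emph{not} have property FD; then \Cref{thm:HLS-not-surjective} immediately gives that the corresponding $R^\infty_{B(\G)}$ is not surjective.

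For ingredient (i), \Cref{rmk-admitting-approx-seq} says every finitely generated residually finite group admits an approximating sequence, so it is enough to find such a group that fails property FD. For ingredient (ii), we appeal to the work of Lubotzky and Shalom \cite{Lubotzky}, who show that property FD is not enjoyed by all finitely generated residually finite groups. Fixing one such group $G$, choosing an approximating sequence $\set{K_n}_{n\in\N}$ for it, and forming the associated HLS groupoid $\G$, the corollary follows.

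The argument is essentially a combination of cited facts with \Cref{thm:HLS-not-surjective}, so there is no genuine mathematical obstacle; the only real choice is which concrete non-FD group from \cite{Lubotzky} (or its offshoots) to quote. Two bookkeeping points are worth making explicit: property FD is an invariant of the group $G$ alone and not of the approximating sequence, so the failure of surjectivity does not depend on the chosen $\set{K_n}_{n\in\N}$, consistent with the formulation of \Cref{thm:HLS-not-surjective}; and the weak-containment notion used in \Cref{prop:HLSweakcontainment} and \Cref{thm:HLS-not-surjective} (annihilators in $C^*(G)$) is exactly the standard dual description of density of the finite-dimensional part of the dual in the Fell topology, which is the phrasing of property FD recalled just before \Cref{thm:HLS-not-surjective}, so nothing needs reconciling.
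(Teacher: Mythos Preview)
Your proof is correct and follows essentially the same route as the paper: apply the contrapositive of \Cref{thm:HLS-not-surjective}, invoke \Cref{rmk-admitting-approx-seq} to reduce to finding a finitely generated residually finite group without property FD, and cite \cite{Lubotzky} for the existence of such groups. The paper additionally names a concrete example, taking $G=\mathrm{SL}_n(\mathbb{Z})$ for $n\geq 3$ (residually finite by Mal'cev's theorem, and without property FD by \cite{Bekka} or \cite[Theorem~3.1]{Lubotzky}), but this is the only difference.
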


\begin{proof}
Let $\G$ be an HLS groupoid associated with a discrete group $G$ with an approximating sequence $\set{K_n}_{n\in \N}$. If $R^\infty_{B(\G)}$ is surjective,
then by \Cref{thm:HLS-not-surjective}, the group $G$ must have property FD.
Thus, to prove the corollary, it is enough to find a discrete group $G$ that does not satisfy property FD but admits an approximating sequence. By \Cref{rmk-admitting-approx-seq}, it is enough to find a finitely generated and residually finite discrete group $G$ that does not satisfy property FD.
For a specific example of such a group $G$, take $G={\rm SL}_n({\mathbb Z})$ with $n\geq 3$. Clearly, $G$ is a finitely generated linear group over a field of characteristic zero. 
By Mal'cev's Theorem, ${\rm SL}_n({\mathbb Z})$ is residually finite as well  \cite{Malcev}. It is known that for $n\geq 3$, the group ${\rm SL}_n({\mathbb Z})$ does not satisfy property FD
(see the main theorem in \cite{Bekka}, or \cite[Theorem 3.1]{Lubotzky} for a more general version of the result). This finishes the proof. 
\end{proof}

\begin{rmk}
   For $n< \infty$, we have $R^n_{B(\G)}:B(\G)\to B(G_n)$ is surjective, as $\G\setminus G_\infty$ is a group bundle with discrete unit space. The discussion presented in this section dealt with the restriction map  $R_{B(\G)}:B(\G)\to B(G_\infty)$.
\end{rmk}

\section{Applications: Box decomposition of the Fourier and Fourier-Stieltjes algebras}
\label{sec:boxsum}
Let $\G=\sqcup_{t\in T}{G_t}$ be a locally compact Hausdorff group bundle  (see \Cref{exa:groupoidexample} (iii)). Recall that $\unsp$ can be identified with $T$. Suppose that $\unsp$, when equipped with the subspace topology, is discrete. For each $t\in T$, let $u_t\in B(G_t)$, and define the function
\begin{equation}\label{eq2}
    \boxplus_{t\in T}u_t:\G \to \C, \ \left(\boxplus_{t\in T}u_t\right)(x)=u_t(x), \mbox{ if } x\in G_t.
\end{equation}
Next, we define 
$$\ell^\infty\text{-}\boxplus_{t\in T}B(G_t):=\set{\boxplus_{t\in T}u_t:\sup_{t\in T} \norm{u_t}_{B(G_t)}<\infty}$$
with componentwise scalar multiplication, addition, and multiplication. We equip $\ell^\infty\text{-}\boxplus_{t\in T}B(G_t)$ with the norm $\norm{\boxplus_{t\in T}u_t}_\boxplus:=\sup_{t\in T} \norm{u_t}_{B(G_t)}$.

\begin{prop}\label{prop:boxsumisometry1}
    Let $\G$ be a group bundle formed from the collection of locally compact Hausdorff groups $\set{G_t}_{t\in T}$. Suppose $\G$ is locally compact and Hausdorff, and $\unsp$ is discrete. Then, $B(\G)$ is isometrically (Banach algebra) isomorphic to $\ell^\infty\text{-}\boxplus_{t\in T}B(G_t)$.
\end{prop}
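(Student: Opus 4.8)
The plan is to exhibit the fiberwise restriction map as the desired isometric isomorphism. First I would define $\Phi\colon B(\G)\to \ell^\infty\text{-}\boxplus_{t\in T}B(G_t)$ by $\Phi(\varphi)=\boxplus_{t\in T}\varphi|_{G_t}$ and check it is well defined and contractive: by \Cref{thm:RestrictionTheorem}, applied with the unit $u=t$ for each $t\in T$, every restriction $\varphi|_{G_t}$ lies in $B(G_t)$ with $\norm{\varphi|_{G_t}}_{B(G_t)}\le\norm{\varphi}_{B(\G)}$, so $\sup_{t}\norm{\varphi|_{G_t}}_{B(G_t)}\le\norm{\varphi}_{B(\G)}<\infty$. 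Since the products on both sides are pointwise and restriction is multiplicative, $\Phi$ is an algebra homomorphism, and it is injective because $\G=\sqcup_{t}G_t$ forces $\varphi=0$ whenever all $\varphi|_{G_t}$ vanish.

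Next I would reduce the remaining content --- surjectivity and norm preservation --- to a single claim: for every family $(u_t)_{t\in T}$ with $M:=\sup_{t}\norm{u_t}_{B(G_t)}<\infty$, the glued function $u:=\boxplus_{t\in T}u_t$ lies in $B(\G)$ with $\norm{u}_{B(\G)}\le M$. This suffices, since then $\Phi(u)=\boxplus_{t}u_t$ (surjectivity) and $\norm{u}_{B(\G)}\le M=\norm{\Phi(u)}_\boxplus\le\norm{u}_{B(\G)}$ (isometry), after which $\Phi^{-1}$ is automatically an isometric algebra homomorphism.

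To prove the claim I would fix $\varepsilon>0$ and, for each $t$, choose (after the usual rescaling of the vectors) a continuous unitary representation $\pi^t$ of $G_t$ on $\Hil_t$ with $u_t=\pi^t_{\xi_t,\eta_t}$ and $\norm{\xi_t}=\norm{\eta_t}\le\sqrt{M+\varepsilon}$. Then I would carry out the construction from the proof of \Cref{prop:bundlesurjection}, now simultaneously over all fibers: since $\unsp$ is discrete, $\fH:=(\set{\Hil_t}_{t\in T},\prod_{t}\Hil_t)$ is a continuous Hilbert field (\Cref{exp-fields}~\eqref{exp-discrete}), and $\calpi((t,g)):=\pi^t(g)$ is a representation of $\G$ on $\fH$ (it is a homomorphism because composable elements lie in a common fiber, and each coefficient function is continuous since continuity of the range map together with discreteness of $\unsp$ forces any convergent net in $\G$ to be eventually contained in a single fiber $G_t$, on which one invokes WOT-continuity of $\pi^t$). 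Taking the bounded sections $\bxi(t):=\xi_t$ and $\boldeta(t):=\eta_t$ in $\Delta_b(\fH)$, one has $\norm{\bxi}_\Delta,\norm{\boldeta}_\Delta\le\sqrt{M+\varepsilon}$ and $\fH_{\bxi,\boldeta}((t,g))=\innprod{\pi^t(g)\xi_t,\eta_t}=u_t(g)$, so $\fH_{\bxi,\boldeta}=u$ and $\norm{u}_{B(\G)}\le M+\varepsilon$; letting $\varepsilon\to 0$ finishes the claim.

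I expect the bundle construction to be essentially routine here, since discreteness of $\unsp$ is exactly what makes the coefficient-continuity requirement automatic. The only step needing care is the norm bookkeeping --- selecting $\varepsilon$ uniformly in $t$ and exploiting that the section norm $\norm{\cdot}_\Delta$ is a supremum --- which is precisely why the target algebra is the $\ell^\infty$-box sum rather than another box sum, and why one obtains an isometry rather than merely a bounded isomorphism.
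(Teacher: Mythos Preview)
Your proof is correct and follows essentially the same strategy as the paper: define $\Phi$ via fiberwise restriction, use \Cref{thm:RestrictionTheorem} for contractivity, and assemble a $\G$-Hilbert bundle from the individual $\pi^t$ (exactly the \Cref{prop:bundlesurjection} construction, run over all fibers at once) to obtain the reverse norm estimate and surjectivity. The only cosmetic difference is that the paper invokes Eymard's norm-attainment lemma to get $\norm{u_t}_{B(G_t)}=\norm{\xi_t}\norm{\eta_t}$ exactly (avoiding your $\varepsilon$), and splits surjectivity and isometry into separate claims, whereas your single claim packages both at once.
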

\begin{proof}
    For $t\in T$, we denote the restriction map $B(\G)\to B(\G_t^t)$ from \Cref{thm:RestrictionTheorem} by $R^t_{B(\G)}$. Note that $\G^t_t=G_t$, since $\G$ is a group bundle.
    Next, we define the map 
    $$\Phi: B(\G)\to \ell^\infty\text{-}\boxplus_{t\in T} B(G_t), \quad \varphi \mapsto \boxplus_{t\in T} R^t_{B(\G)}(\varphi).$$ 
    \begin{claim}\label{claim-well-defined}
        $\Phi$ is well-defined and algebra homomorphism. 
    \end{claim}
    \noindent \emph{Proof of \Cref{claim-well-defined}:}
    The claim that $\Phi$ is an algebra homomorphism follows from the fact that the restriction maps $R_{B(G_t)}^t$ are algebra homomorphisms. We only need to show that $\Phi$ is well-defined, in the sense that it has the correct codomain.
    Let $\omega\in B(\G)$. By \Cref{thm:RestrictionTheorem} \eqref{item-1-thm1}, 
    we have $R^t_{B(\G)}(\omega)\in B(G_t)$, for every $t\in T$. To show $\Phi$ is well-defined, it only remains to show that $\sup_{t\in T}\|R^t_{B(\G)}(\omega)\|_{B(G_t)}$ is finite. By definition of $B(\G)$, we can write $\omega=\calpi_{\bxi,\boldeta}$, where $\calpi$ is a representation of $\G$ on a continuous Hilbert field $\fH$, and $\bxi,\boldeta \in \Delta_b(\fH)$. 
    By \Cref{thm:RestrictionTheorem} \eqref{item-0-codomain}, for each $t\in T$ the restriction of $\calpi$ to $G_t$ is a continuous unitary representation of $G_t$. Let $\pi^t$ denote this representation, and set $\xi_t,\eta_t$ to be $\bxi(s(t))$ and $\boldeta(r(t))$, respectively. Then, for $x\in G_t$,
    $$R^t_{B(\G)}(\omega)(x)=\calpi_{\bxi,\boldeta}(x)=\innprod{\calpi(x)\bxi(t),\boldeta(t)}=\pi^t_{\xi_t,\eta_t}(x).$$
So 
\begin{eqnarray*}
\sup_{t\in T}\|R^t_{B(\G)}(\omega)\|_{B(G_t)}=
\sup_{t\in T}\|\pi^t_{\xi_t,\eta_t}\|_{B(G_t)}
\leq \sup_{t\in T}\norm{\xi_t}_{\Hil_{\pi^t}}\norm{\eta_t}_{\Hil_{\pi^t}}
\leq\norm{\bxi}_\Delta\norm{\boldeta}_\Delta<\infty.
\end{eqnarray*}
This proves that $\Phi$ is well-defined. 

\begin{claim}\label{claim-surjective}
    $\Phi$ is surjective. 
\end{claim}
\noindent \emph{Proof of \Cref{claim-surjective}:}
Let $\boxplus_{t\in T}u_t$ be an arbitrary element of $\ell^\infty\text{-}\boxplus_{t\in T}B(G_i)$. 
By \cite[Lemma 2.14]{Eymard}, each $u_t\in B(G_t)$ can be represented as a coefficient function $\pi^t_{\xi_t,\eta_t}$, where $\pi^t$ is a continuous unitary representation of $G_t$ on the Hilbert space $\Hil_t$, and $\xi_t,\eta_t\in \Hil_t$ such that $$\norm{u}_{B(G_t)}=\norm{\xi_t}\norm{\eta_t}.$$
Without loss of generality, we may assume $\norm{\eta_t}=1$ for each $t\in T$. 

Consider the continuous Hilbert field $\fH=\{\Hil_t\}_{t\in T}$, and define sections 
$\bxi, \boldeta$ as $\bxi(t)=\xi_t$ and $\boldeta(t)=\eta_t$. 
Clearly, the sections $\bxi, \boldeta$ are continuous as $T$ is discrete. 
As $\norm{\eta_t}=1$ for every $t\in T$, we have $\norm{\boldeta}_\Delta=1$. We also have
$$\norm{\bxi}_\Delta=\sup_{t\in T}\norm{\xi_t}=\sup_{t\in T}\norm{\xi_t}\norm{\eta_t}=\sup_{t\in T}\norm{u_t}<\infty,$$
since $\boxplus_{t\in T}u_t$ belongs to $\ell^\infty\text{-}\boxplus_{t\in T}B(G_i)$.
We now define the groupoid representation $\calpi$ of $\G$ on the continuous Hilbert field $\fH$ to be 
$$\calpi((t,g))=\pi^{t}(g).$$ 
The continuity of coefficient functions associated with $\calpi$ easily follows from the continuity of the coefficient functions of each $\pi^t$, since a converging net
$\set{(t_\alpha,g_\alpha)}_{\alpha\in I}$ in $\G$ eventually lies in one fiber $G_t$.
Finally, it is easy to see that $\Phi(\calpi_{\bxi,\boldeta})=\boxplus_{t\in T}u_t$, and $\Phi$ is surjective.

\begin{claim}\label{claim:isometry}
    $\Phi$ is an isometry.
\end{claim}
\noindent \emph{Proof of \Cref{claim:isometry}:}
Let $\varphi\in B(\G)$. By \Cref{thm:RestrictionTheorem}, each $R^t_{B(\G)}$ is norm-decreasing. So,
\begin{equation}\label{eq-norm-dec}
\norm{\Phi(\varphi)}_{\ell^\infty\text{-}\boxplus_{t\in T}B(G_i)}=\sup_{t\in T}\norm{R^t_{B(\G)}(\varphi)}_{B(G_t)}\leq \sup_{t\in T}\norm{\varphi}_{B(\G)}=\norm{\varphi}_{B(\G)}.
\end{equation}
On the other hand, we have
\begin{align*}
    \norm{\varphi}_{B(\G)}=\inf_{\varphi=\calpi_{\bxi,\boldeta}}\norm{\bxi}_\Delta \norm{\boldeta}_\Delta=\nonumber &\inf_{\varphi=\calpi_{\bxi,\boldeta}}\of{\sup_{t\in T}\norm{\bxi(t)}_{\Hil_t}\cdot \sup_{t\in T}\norm{\boldeta(t)}_{\Hil_t}}\\\nonumber
    &\geq\inf_{\varphi=\calpi_{\bxi,\boldeta}}\of{\sup_{t\in T}\norm{\bxi(t)}_{\Hil_t}\norm{\boldeta(t)}_{\Hil_t}}\\
    &\geq \sup_{t\in T}\of{\inf_{\varphi=\calpi_{\bxi,\boldeta}}\norm{\bxi(t)}_{\Hil_t}\norm{\boldeta(t)}_{\Hil_t}},
    \end{align*}
where the last inequality follows from \cite[Corollary 3.8.4]{CategoryTheory}.
So, 
\begin{eqnarray}\label{eq-norm-upper}
    \norm{\varphi}_{B(\G)}&\geq& \sup_{t\in T}\of{\inf_{\varphi=\calpi_{\bxi,\boldeta}}\norm{\bxi(t)}_{\Hil_t}\norm{\boldeta(t)}_{\Hil_t}}\nonumber\\
    &\geq& 
    \sup_{t\in T}\inf_{\varphi=\calpi_{\bxi,\boldeta}} \|R_{B(\G)}^t(\varphi)\|_{B(G_t)}\nonumber\\
    &\geq& 
    \sup_{t\in T} \|R_{B(\G)}^t(\varphi)\|_{B(G_t)}.
\end{eqnarray}
Putting \eqref{eq-norm-dec} and \eqref{eq-norm-upper} together, we get that $\Phi$ is an isometry.
\end{proof}
We apply \Cref{prop:boxsumisometry1} to show an isometric 
isomorphism $A(\G)\simeq c_0\text{-}\boxplus_{t\in T}A(G_t)$ where $c_0\text{-}\boxplus_{t\in T}A(G_t)$ is a subset of $\ell^\infty\text{-}\boxplus_{t\in T}B(G_t)$ consisting of elements $\boxplus_{t\in T} f_t$ with $f_t\in A(G_t)$, for which the function $t\mapsto \norm{f(t)}_{A(G_t)}$ vanishes at infinity.
In the following proof, we use $c_c\text{-}\boxplus_{t\in T}A(G_t)$, which is the subspace of $c_0\text{-}\boxplus_{t\in T}A(G_t)$ consisting of $\boxplus_{t\in T} f_t$ where only finitely many terms are nonzero.
\begin{cor}\label{cor:boxsumisometry2}
Let $\G=\sqcup_{t\in T} G_t$ be a  locally compact and Hausdorff  group bundle, and suppose that $\unsp$ is discrete. Then, $A(\G)$ is isometrically (Banach algebra) isomorphic to $c_0\text{-}\boxplus_{t\in T}A(G_t)$.
\end{cor}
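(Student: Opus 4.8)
The plan is to reduce \Cref{cor:boxsumisometry2} to \Cref{prop:boxsumisometry1} by identifying $A(\G)$ as a specific closed subalgebra of $B(\G) \cong \ell^\infty\text{-}\boxplus_{t\in T}B(G_t)$ and then checking that this subalgebra is exactly $c_0\text{-}\boxplus_{t\in T}A(G_t)$. Concretely, let $\Phi\colon B(\G)\to\ell^\infty\text{-}\boxplus_{t\in T}B(G_t)$ be the isometric isomorphism of \Cref{prop:boxsumisometry1}, and recall $\G^t_t=G_t$ so that $\Phi(\varphi)=\boxplus_{t\in T}R^t_{B(\G)}(\varphi)$. Since $A(\G)$ is a closed subalgebra of $B(\G)$ and $\Phi$ is isometric, it suffices to prove $\Phi(A(\G))=c_0\text{-}\boxplus_{t\in T}A(G_t)$.

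First I would handle the generating functions. Take $\Lambda_{F,G}\in A_{cf}(\G)$ with $F,G\in C_c(\G)$. Because $\unsp=T$ is discrete and $\G$ is a group bundle with the canonical Haar system $\{\lambda^t\}_{t\in T}$ of Haar measures on the $G_t$'s, the space $L^2(\G^t)=L^2(G_t,\lambda^t)$, and one computes $R^t_{B(\G)}(\Lambda_{F,G})=(L_{G_t})_{F|_{G_t},G|_{G_t}}$, a coefficient of the left regular representation of $G_t$, hence an element of $A(G_t)$; moreover $F|_{G_t}=0$ for all but finitely many $t$ since $F$ has compact support and $T$ is discrete. Thus $\Phi(\Lambda_{F,G})\in c_c\text{-}\boxplus_{t\in T}A(G_t)$. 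Taking pointwise products and linear combinations shows $\Phi(A_c(\G))\subseteq c_c\text{-}\boxplus_{t\in T}A(G_t)$, and conversely every finitely-supported $\boxplus_{t\in T}f_t$ with $f_t\in A_c(G_t)$ (a dense subspace of $A(G_t)$ spanned by such coefficient functions) is hit — here I would use that for group bundles with discrete unit space the $C_c(\G)$ sections restrict onto $C_c(G_t)$ in each fiber, so $A_c(G_t)$-valued finitely supported elements are in the image of $A_c(\G)$. This gives that $\Phi(A_c(\G))$ is dense in $c_c\text{-}\boxplus_{t\in T}A(G_t)$ in the appropriate norm, and $c_c\text{-}\boxplus$ is dense in $c_0\text{-}\boxplus$.

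Then I would pass to closures. Since $\Phi$ is an isometric isomorphism, $\Phi(A(\G))=\Phi\bigl(\overline{A_c(\G)}^{\|\cdot\|_{B(\G)}}\bigr)=\overline{\Phi(A_c(\G))}^{\|\cdot\|_\boxplus}$. By the previous paragraph this closure equals $\overline{c_c\text{-}\boxplus_{t\in T}A(G_t)}^{\|\cdot\|_\boxplus}$, and it remains to verify that this closure is precisely $c_0\text{-}\boxplus_{t\in T}A(G_t)$. One inclusion is clear; for the other, given $\boxplus_{t\in T}f_t\in c_0\text{-}\boxplus_{t\in T}A(G_t)$, truncate to the finitely many indices $t$ with $\|f_t\|_{A(G_t)}\geq\varepsilon$ and, within each such fiber, approximate $f_t\in A(G_t)$ by an element of $A_c(G_t)$ — standard for the group Fourier algebra; this produces a $c_c\text{-}\boxplus$ element within $\varepsilon$ of the target. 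Finally, since $\Phi$ restricts to an isometry $A(\G)\to c_0\text{-}\boxplus_{t\in T}A(G_t)$ that is a bijective algebra homomorphism, the corollary follows.

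The main obstacle I anticipate is the fiberwise surjectivity/density bookkeeping: verifying cleanly that the restriction of $C_c(\G)$ (or of $A_c(\G)$) to a single fiber $G_t$ surjects onto $C_c(G_t)$ (resp.\ generates $A_c(G_t)$), which requires the discreteness of $\unsp$ to guarantee that characteristic-function-like cutoffs isolating a single fiber are continuous on $\G$, together with the identification of the canonical Haar system on the bundle so that $(L_{G_t})_{F|_{G_t},G|_{G_t}}$ really is a Fourier-algebra coefficient of $G_t$ rather than of some reweighted representation. Once that fiberwise picture is pinned down, the rest is a routine closure argument built on \Cref{prop:boxsumisometry1}.
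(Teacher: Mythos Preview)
Your proposal is correct and follows essentially the same route as the paper: use the isometry $\Phi$ from \Cref{prop:boxsumisometry1}, verify that $\Phi$ carries $A_c(\G)$ onto $c_c\text{-}\boxplus_{t\in T}A_c(G_t)$ by restricting and extending-by-zero across the clopen fibers, and then pass to closures to land in $c_0\text{-}\boxplus_{t\in T}A(G_t)$. The only cosmetic difference is that the paper invokes \Cref{thm:denserange} for the fiberwise surjectivity step, whereas you argue it directly from discreteness of $\unsp$; your anticipated ``obstacle'' is exactly this point and is handled just as you describe.
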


\begin{proof}
    The map $\Phi:B(\G)\to \ell^\infty\text{-}\boxplus_{t\in T}B(G_t)$ of \Cref{prop:boxsumisometry1} provides an isometric isomorphism of Banach algebras. To prove the corollary, we only need to show that $\Phi(A(\G))=c_0\text{-}\boxplus_{t\in T}A(G_t)$.
    Recall that $A_{cf}(G_t)$ is the set of coefficient functions $L_{f,g}^{G_t}$ with $f,g\in C_c(G_t)$, and $A_c(G_t)$ is the algebra generated by $A_{cf}(G_t)$.  Let $u=\Lambda_{f,g}$, with $f,g\in C_c(\G)$, be an arbitrary element of $A_{cf}(\G)$. Since $T$ is discrete, there is a finite set $F\subseteq T$ such that $R^t_{A(\G)}(u)$ is nonzero for $t\in F$. Moreover, each $f|_{G_t}\in C_c(\G_t)$, and so $R^t_{A(\G)}(\Lambda_{f,g})\in A_{cf}(G_t)$. This demonstrates 
    $$\Phi(A_{cf(\G)})=c_c\text{-}\boxplus_{t\in T} A_{cf}(G_t).$$
    By \Cref{thm:denserange}, $R^t_{A(\G)}:A(\G)\to A(G_t)$ is a surjective algebra homomorphism.  So, we have
    $$\Phi(A_c(\G))=c_c\text{-}\boxplus_{t\in T}A_c(G_t).$$ 
    Then, by definition of $A(\G)$ and the fact that $\Phi$ is an isometry, we have
    $$\Phi(A(\G))=\Phi(\overline{A_c(\G)})=\overline{\Phi(A_c(\G))}=\overline{
    c_c\text{-}\boxplus_{t\in T}A_c(G_t)}=\overline{c_c\text{-}\boxplus_{t\in T}\overline{A_c(G_t)}}=\overline{c_c\text{-}\boxplus_{t\in T}A(G_t)}.$$
    Thus, $\Phi:A(\G) \to \overline{c_c\text{-}\boxplus_{t\in T}A(G_t)}=c_0\text{-}\boxplus_{t\in T}A(G_t)$ is an isometric algebra homomorphism.
\end{proof}

\section*{Acknowledgements}
The authors sincerely thank Brian Forrest for fruitful discussions and insightful ideas that led to a simplified proof for a result in this manuscript. 
The first author was partially supported by National Science Foundation Grant DMS-1902301 during this project.
The second author acknowledges support from National Science Foundation Grant DMS-1902301 and DMS-2408008 during the preparation of this article. 
The authors acknowledge Simons Foundation Travel Support for Mathematicians that funded a trip for their collaboration. 

\providecommand{\bysame}{\leavevmode\hbox to3em{\hrulefill}\thinspace}
\providecommand{\MR}{\relax\ifhmode\unskip\space\fi MR }
\providecommand{\MRhref}[2]{%
  \href{http://www.ams.org/mathscinet-getitem?mr=#1}{#2}
}
\providecommand{\href}[2]{#2}

\end{document}